\newtheorem{theorem}{\bf Theorem}[section]
\newtheorem{propn}[theorem]{\bf Proposition}
\newtheorem{assump}[theorem]{\bf Assumption}
\newtheorem{lemma}[theorem]{\bf Lemma}
\newtheorem{definition}[theorem]{\bf Definition}
\newtheorem{remark}[theorem]{\bf Remark}
\newtheorem{cor}[theorem]{\bf Corollary}
\newcommand{\argmin}{{\rm arg\,min}}
\newcommand{\cB}{{\mathcal B}}
\newcommand{\cC}{{\mathcal C}}
\newcommand{\cD}{{\mathcal D}}
\newcommand{\cH}{{\mathcal H}}
\newcommand{\cL}{{\mathcal L}}
\newcommand{\cP}{{\mathcal P}}
\newcommand{\cT}{{\mathcal T}}
\newcommand{\cV}{{\mathcal V}}
\newcommand{\cW}{{\mathcal W}}
\newcommand{\cZ}{{\mathcal Z}}
\newcommand{\N}{\mathbb{N}}
\newcommand{\R}{\mathbb{R}}
\newcommand{\norm}[1]{|| #1||}
\renewcommand{\div}{{\rm div \;}}
\newcommand{\ba}{\begin{array}}
\newcommand{\ea}{\end{array}}
\newcommand{\be}{\begin{equation}}
\newcommand{\ee}{\end{equation}}
\newcommand{\bea}{\begin{eqnarray}}
\newcommand{\eea}{\end{eqnarray}}
\newcommand{\beq}{\begin{equation}}
\newcommand{\eeq}{\end{equation}}
\newcommand{\bqt}{\begin{quote}}
\newcommand{\eqt}{\end{quote}}
\begin{document}

%
\title[Variational Convergence Analysis]
{Variational Convergence Analysis With Smoothed-TV Interpretation}

%
\author{Erdem Altuntac}

\address{Institute for Numerical and Applied Mathematics,
University of G\"{o}ttingen, Lotzestr. 16-18,
D-37083, G\"{o}ttingen, Germany}


\ead{\mailto{e.altuntac@math.uni-goettingen.de}}

\begin{abstract}

The problem of minimization of the least squares 
functional with a Fr\'{e}chet differentiable, 
lower semi-continuous, convex penalizer 
$J$ is considered to be solved. 
The penalizer maps the functions of
Banach space $\cV$ into $\R_{+},$ 
$ J : \cV \rightarrow \R_{+}.$ 
To be more precise, we also assume that some
given measured data $f^{\delta}$ is defined
on a compactly supported domain $\cZ \subset \R_{+}$ 
and in the class of Hilbert space, $f^{\delta} \in \cH = \cL^{2}(\cZ).$
Then the general Tikhonov cost functional, associated with
some given linear, compact and injective forward operator 
$\cT :\cV \rightarrow \cL^{2}(\cZ),$ is formulated as

\bea
F_{\alpha}(\varphi , f^{\delta}) : & \cV \times \cL^{2}(\cZ) & \rightarrow \R_{+} ,
\nonumber\\
& (\varphi , f^{\delta}) \longmapsto &
F_{\alpha}(\varphi , f^{\delta}) := 
\frac{1}{2}\norm{\cT\varphi - f^{\delta}}_{\cL^2(\cZ)}^2 + \alpha J(\varphi) .
\nonumber
\eea

Convergence of the regularized optimum solution
$\varphi_{\alpha(\delta)} \in \argmin_{\varphi \in \cV} F_{\alpha}(\varphi , f^{\delta})$
to the true solution $\varphi^{\dagger}$ is analysed
by means of Bregman distance.

First part of this work aims to provide some general convergence 
analysis for generally strongly convex functional $J$
in the cost functional $F_{\alpha}$. In this part the key observation
is that strong convexity of the penalty term $J$ with its convexity
modulus implies norm convergence
in the Bregman metric sense. We also
study the characterization of convergence
by means of a concave, monotonically increasing index function 
$\Psi : [0 , \infty) \rightarrow [0 , \infty) $ with 
$\Psi (0) = 0.$ In the second part, this general analysis
will be interepreted for the smoothed-TV functional ,

\bea
J_{\beta}^{\mathrm{TV}}(\varphi) := \int_{\Omega} \sqrt{\Vert \nabla \varphi(x) \Vert_2^2 + \beta} dx ,
\nonumber
\eea
where $\Omega$ is a compact and convex domain. To this end, a new lower
bound for the Hessian of $J_{\beta}^{\mathrm{TV}}$ will be estimated.
The result of this work
is applicable for any strongly convex functional.

\bigskip
\textbf{Keywords.}
{convex regularization, Bregman distance, smoothed total variation.}
\end{abstract}

\bigskip


\section{Introduction}

As alternative to well established Tikhonov regularization,
\textbf{\cite{Tikhonov63, TikhonovArsenin77}},
studying convex variational regularization with some general penalty term $J$
has become important over the last decade. Introducing a new image 
denoising method named as {\em total variation}, \textbf{\cite{RudinOsherFatemi92}},
is commencement of such study.
Application and analysis of the method have been widely carried
out in the communities of inverse problems and optimization, 
\textbf{\cite{AcarVogel94, BachmayrBurger09, BardsleyLuttman09, ChambolleLions97,
ChanChen06, ChanGolubMulet99, DobsonScherzer96, 
DobsonVogel97, VogelOman96}}. Particularly, formulating
the minimization problem as variational problem and
estimating convergence rates with variational source conditions
has also become popular recently, \textbf{\cite{BurgerOsher04, Grasmair10, 
Grasmair13, GrasmairHaltmeierScherzer11, Lorenz08}}. 

Problem of finding the optimum minimizer for a general 
Tikhonov type functional is formulated below

\beq
\label{problem0}
\varphi_{\alpha(\delta)} \in 
\argmin_{\varphi \in \cV} \left\{\frac{1}{2} \norm{\cT\varphi - f^{\delta}}_{\cH}^2 + \alpha J(\varphi) \right\} .
\eeq
Here, $ J : \cV \rightarrow \R_{+},$
is the convex penalty term and it is smooth in the Fr\'{e}chet derivative sense
with the regularization parameter $\alpha > 0$ before it.

This work aims to utilize convex analysis together with 
Bregman distance as two fundamental concepts to arrive
at convergence and convergence rates in convex regularization strategy.
In particular, it will be observed that the strong convexity
provides new quantitative analysis for the Bregman distance
which also implies norm convergence. We will interprete
this observation for the smoothed-TV functional,
\textbf{\cite{ChanGolubMulet99, DobsonScherzer96}},

\bea
J_{\beta}^{\mathrm{TV}}(\varphi) := \int_{\Omega} \sqrt{\Vert \nabla \varphi(x) \Vert_2^2 + \beta} dx .
\nonumber
\eea
Eventually, it will be shown that the strong convexity of $J_{\beta}^{\mathrm{TV}}$ 
requires the solution to be in the class of the Sobolev space $\cW^{1,2}.$

We rather focus on {\em a posteriori} strategy for the choice
of regularization parameter $\alpha = \alpha(\delta , f^{\delta}).$
and this does not require any {\em a priori} knowledge about 
the true solution. We always work with the given perturbed data
$f^{\delta}$ and introduce the rates according 
to the perturbation amount $\delta.$ Under this {\em a posteriori} strategy
and the assumed deterministic noise model, 
$f^{\delta} \in \cB_{\delta}(f^{\dagger}),$ 
in the measurement space, the following rates will be able 
to be quantified;

\begin{enumerate}
\item $\cT\varphi_{\alpha(\delta , f^{\delta})} \in \cB_{o(\delta)}(\cT\varphi^{\dagger});$
the discrepancy between 
$\cT\varphi_{\alpha(\delta , f^{\delta})}$ and $\cT\varphi^{\dagger}$
by the rate of $o(\delta).$

\item $D_{J}(\varphi_{\alpha(\delta , f^{\delta})} , \varphi^{\delta}) \leq O(\Psi(\delta));$
upper bound for the Bregman distance $D_J,$ which will immediately imply
the desired norm convergence 
$\Vert\varphi_{\alpha(\delta , f^{\delta})} - \varphi^{\dagger}\Vert_{\cV}.$

\item $\varphi_{\alpha(\delta , f^{\delta})} \in \cB_{o(\Psi(\delta))}(\varphi^{\dagger});$
convergence of the regularized solution 
$\varphi_{\alpha(\delta , f^{\delta})}$ 
to the true solution $\varphi^{\dagger}$ by the rate of
the index function $O(\Psi(\delta)).$
\end{enumerate}

\section{Notations and prerequisite knowledge}
\label{notations}

\subsection{Functional analysis notations}
\label{notations_functional_analysis}

Let $\cC(\Omega)$ be the space of continuous functions on
a compact domain $\Omega$ with its Lipschitz 
boundary $\partial\Omega.$ Then, the function space $\cC^{k}(\Omega)$ 
is defined by

\begin{displaymath}
\cC^{k}(\Omega) := \{ \varphi \in \cC(\Omega) : D^{\sigma}(\varphi) \in \cL^{p}(\Omega) \mbox{, } \forall \sigma \in \N \mbox{ with } \vert \sigma \vert \leq k\} .
\end{displaymath}
We will also need to work with Sobolev spaces. 
We define Sobolev space for $p \geq 1$ by,
\bea
\label{sobolev_sp}
\cW^{k,p}(\Omega) := \{\varphi \in \cL^{p}(\Omega) : D^{\sigma}(\varphi) \in \cL^{p}(\Omega) \mbox{, } \forall \sigma \in \N \mbox{ with } \vert \sigma \vert \leq k \} .
\nonumber
\eea
We also denote another Sobolev function space with
zero boundary value by
\bea
\label{sobolev_sp_0}
\cW_{0}^{k,p}(\Omega) := \{ \varphi \in \cC(\Omega) \vert D^{\sigma}(\varphi) \in \cL^{p}(\Omega) \mbox{ } 
\forall \sigma \in \N \mbox{ with } \vert \sigma \vert \leq k, \mbox{ and } \varphi(x) = 0 \mbox{ for } x \in \partial\Omega \} .
\nonumber
\eea
It is also worthwhile to recall the density argument, 
\textbf{\cite[Subsection 5.2.2]{Evans98}}, in $\cW^{k,p}(\Omega),$

\begin{displaymath}
\overline{C_{c}^{\infty}(\Omega)} = \cW_{0}^{k,p}(\Omega) .
\end{displaymath}

In this work, we focus on the total variation (TV) of a
$\cC^{1}$ class function. TV of a function
defined over the compact domain $\Omega$ is given below.

\begin{definition}[$TV(\varphi , \Omega)$]\textbf{\cite[Definition 9.64]{ScherzerGrasmair09}}
\label{TV_functional}
Over the compact domain $\Omega,$
total variation of a function $TV(\varphi , \Omega)$ 
is defined by the following variational form

\bea
\label{TV_functional2}
 TV(\varphi , \Omega) := \sup_{\Phi\in \cC_{c}^{1}(\Omega)} 
\left\{ \int_{\Omega} \varphi(x) \div \Phi(x) dx\mbox{ } : \mbox{ } \norm{\Phi}_{\infty} \leq 1 \right\}
\eea
\end{definition}

Total variation type regularization targets 
the reconstruction of bounded variation (BV) 
class of functions that are defined by
\bea
\label{bv_def}
BV(\Omega) := \{ \varphi \in \cL^{1}(\Omega) : TV(\varphi , \Omega) < \infty \}
\eea
with the norm
\beq
\norm{\varphi}_{BV} := \norm{\varphi}_{\cL^1} + TV(\varphi , \Omega).
\eeq
BV function spaces are Banach spaces, \textbf{\cite{Vogel02}}.
Furthermore, if a function $\varphi$ is in the class of Sobolev space $\cW^{1,1}$ 
it is also in the space of $BV(\Omega),$ (see \textbf{\cite{AcarVogel94}} and 
\textbf{\cite[Proposition 8.13]{Vogel02}}). 
By the result in \textbf{\cite[Theorem 2.1]{AcarVogel94}}, 
it is known that one can arrive, with a proper choice of $\Phi \in \cC_{c}^{1}(\Omega),$ 
at the following formulation from (\ref{TV_functional2}),

\beq
\label{tv_integral_form}
TV(\varphi) = \int_{\Omega} \norm{\nabla\varphi(x)}_2 dx \cong
\int_{\Omega} \left( \norm{\nabla\varphi(x)}_2^2 + \beta \right)^{1/2} dx = J_{\beta}^{\mathrm{TV}}(\varphi),
\eeq
where $0 < \beta < 1$ is fixed. 
We also refer to
\textbf{\cite{ChambolleLions97, ChanGolubMulet99, DobsonScherzer96, RudinOsherFatemi92, VogelOman96}}
where (\ref{tv_integral_form}) has appeared.


\subsection{Some motivation for general regularization theory}
\label{notations_regularization_theory}

For the given linear, injective and compact
forward operator $\cT : \cV \rightarrow \cH,$
over some compact and convex domain $\Omega,$
we formulate the following smooth, convex variational minimization,

\beq
\label{problem1}
\argmin_{\varphi \in \cV} \left\{\frac{1}{2} \norm{\cT\varphi - f^{\delta}}_{\cH}^2 + \alpha J(\varphi) \right\}
\eeq
with its penalty 
$ J : \cV \rightarrow \R_{+},$ 
and the regularizatin parameter $\alpha > 0.$
Another dual minimization problem to (\ref{problem1}) is given by 

\beq
\label{constrained_problem}
J(\varphi) \rightarrow \min_{\varphi \in \cV}
\mbox{, subject to } \norm{\cT\varphi - f^{\delta}}_{\cH} \leq \delta .
\eeq
Following from the problem \ref{problem1},
in what follows, the general Tikhonov type cost functional 
$F_{\alpha} : \cV \times \cH \rightarrow \R_{+}$ 
with $2-$convex penalty term $J: \cV \rightarrow \R_{+}$ is then formulated by
\bea
\label{cost_functional}
F_{\alpha}(\varphi , f^{\delta}) := 
\frac{1}{2}\norm{\cT\varphi - f^{\delta}}_{\cH}^2 + \alpha J(\varphi) .
\eea

In the Hilbert scales, it is known that the solution of 
the penalized minimizatin problem (\ref{problem1}) equals to the 
solution of the constrained minimization problem 
(\ref{constrained_problem}), \textbf{\cite[Subsection 3.1]{BurgerOsher04}}.
The regularized solution $\varphi_{\alpha(\delta)}$ 
of the problem (\ref{problem1}) satisfies the following 
first order optimality conditions,

\bea
\label{optimality_1}
& 0 & = \nabla F_{\alpha}(\varphi_{\alpha(\delta)})
\nonumber\\
& 0 & = \cT^{\ast}(\cT\varphi_{\alpha(\delta)} - f^{\delta}) + \alpha(\delta) \nabla J(\varphi_{\alpha(\delta)})
\nonumber\\
& \cT^{\ast}(f^{\delta} - \cT\varphi_{\alpha(\delta)}) & = \alpha(\delta) \nabla  J(\varphi_{\alpha(\delta)}) .
\eea

The choice of regularization parameter $\alpha(\delta , f^{\delta})$ 
in this work does not require any {\em a priori} knowledge about 
the true solution. We always work with perturbed data
$f^{\delta}$ and introduce the rates according 
to the perturbation amount $\delta.$ Throughout stability
analysis here, we consider the classical deterministic noise
model

\begin{displaymath}
f^{\delta} \in \cB_{\delta}(f^{\dagger}), \textit{\mbox{ i.e., }} 
\norm{f^{\dagger} - f^{\delta}} \leq \delta .
\end{displaymath}


\subsection{Bregman distance as a vital tool for the norm convergence}
\label{bregman_divergence_def}

\begin{definition}\textbf{[Bregman distance]}\textbf{\cite{Bregman67}}
Let $\cP : \cV \rightarrow \R \cup \{ \infty \}$ be a convex functional and
smooth in the Fr\'{e}chet derivative sense.
Then, for $u \in \cV,$ Bregman distance associated with the functional 
$\cP$ is defined by

\bea
\label{bregman_divergence_intro}
D_{\cP}(u , u^{\ast}) = \cP(u) - \cP(u^{\ast}) - \langle \nabla \cP(u^{\ast}) , u - u^{\ast} \rangle .
\eea
\end{definition}

Following formulation emphasizes the functionality of the Bregman
distance in proving the norm convergence of the minimizer
of the convex minimization problem to the true solution.

\begin{definition}\textbf{[Total convexity]}\textbf{\cite[Definition 1]{Bredies09}}
\label{total_convexity}

Let $\cP : \cV \rightarrow \R \cup \{ \infty \}$ be a Fr\'{e}chet differentiable 
convex functional. Then $\cP$ is called totally convex in $u^{\ast} \in \cV,$ 
if, 

\bea
\cP(u) - \cP(u^{\ast}) - \langle \nabla \cP(u^{\ast}) , u - u^{\ast} \rangle \rightarrow 0
\Rightarrow \norm{u - u^{\ast}}_{\cV} \rightarrow 0 .
\nonumber
\eea
It is said that $\cP$ is {\em q-convex} in $u^{\ast} \in \cV$
with a $q \in [2, \infty ),$ if for all $M > 0$ there exists a $c^{\ast} > 0$ 
such that for all $\norm{u - u^{\ast}}_{\cV} \leq M$ we have

\beq
\label{q_convexity}
\cP(u) - \cP(u^{\ast}) - \langle \nabla \cP(u^{\ast}), u - u^{\ast} \rangle \geq c^{\ast} \norm{u - u^{\ast}}_{\cV}^q .
\eeq
\end{definition}

Throughout our norm convergence estimations, we refer to this
definition for the case of $2-$convexity. We will also study
different formulations of the Bregman distance. Common usage
of the Bregman distance is to associate it with the penalty
term $J$ appears in the problem (\ref{problem1}). Here, we also
make use of different examples of the Bregman distance.

\begin{remark}\textbf{[Examples of the Bregman distance]}
\label{various_bregman}
Let $\varphi_{\alpha(\delta)} , \varphi^{\dagger} \in \cV$ 
be the regularized and the true solutions of the problem
(\ref{problem1}) respectively. Then we give the following
examples of the Bregman distance;

\begin{itemize}
\item Bregman distance associated with the cost functional $F_{\alpha}:$
\beq
\label{bregman_def_cost_func}
D_{F_{\alpha}}(\varphi_{\alpha(\delta)} , \varphi^{\dagger})
= F_{\alpha}(\varphi_{\alpha(\delta)} , f^{\delta}) - F_{\alpha}(\varphi^{\dagger} , f^{\delta}) 
- \langle \nabla F_{\alpha}(\varphi^{\dagger} , f^{\delta}) , \varphi_{\alpha(\delta)} - \varphi^{\dagger} \rangle ,
\eeq

\item Bregman distance associated with the penalty $J:$
\beq
\label{bregman_def_regularizer}
D_{J}(\varphi_{\alpha(\delta)} , \varphi^{\dagger})
= J(\varphi_{\alpha(\delta)}) - J(\varphi^{\dagger})
- \langle \nabla J(\varphi^{\dagger}) , \varphi_{\alpha(\delta)} - \varphi^{\dagger} \rangle .
\eeq

\end{itemize}
\end{remark}

Composite form of the classical Bregman distance brings
another formulation of it named as
{\em symmetrical Bregman distance}, \textbf{\cite[Definition 2.1]{Grasmair13}}, 
and defined by

\beq
\label{symmetrical_bregman}
D_{\cP}^{\mathrm{sym}}(u , u^{\ast}) := D_{\cP}(u , u^{\ast}) + D_{\cP}(u^{\ast} , u) .
\eeq
Inherently, symmetric Bregman distance is also
useful for showing norm convergence as established 
below.

\begin{propn}\textbf{\cite[as appears in the proof of Theorem 4.4]{Grasmair13}}
Let $\cP : \cV \rightarrow \R_{+} \cup \{ \infty \}$ be a smooth and q-convex functional. 
Then there exists positive constant $c^{\ast} > 0$ 
such that for all $\norm{u - u^{\ast}}_{\cV} \leq M$ we have

\bea
\label{sym_bregman_to_norm_conv}
D_{\cP}^{\mbox{sym}}(u , u^{\ast}) &=& \langle \nabla \cP(u^{\ast}) - \nabla \cP(u) , u^{\ast} - u \rangle
\nonumber\\
& \geq & c^{\ast} \norm{u - u^{\ast}}_{\cV}^2 .
\eea
\end{propn}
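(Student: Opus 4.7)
The plan is to first derive the announced identity by direct algebra, and then invoke $q$-convexity to produce the quantitative lower bound. For the identity, I would write out the two Bregman distances that make up $D_{\cP}^{\mathrm{sym}}$ using definition (\ref{bregman_divergence_intro}),
\bea
D_{\cP}(u,u^{\ast}) &=& \cP(u) - \cP(u^{\ast}) - \langle \nabla\cP(u^{\ast}),\, u - u^{\ast}\rangle, \nonumber\\
D_{\cP}(u^{\ast},u) &=& \cP(u^{\ast}) - \cP(u) - \langle \nabla\cP(u),\, u^{\ast} - u\rangle. \nonumber
\eea
Adding the two lines, the scalar terms $\cP(u)$ and $\cP(u^{\ast})$ cancel, and after rewriting $-\langle \nabla\cP(u^{\ast}), u - u^{\ast}\rangle = \langle \nabla\cP(u^{\ast}), u^{\ast} - u\rangle$ and combining with the remaining pairing, one arrives at the claimed closed form $D_{\cP}^{\mathrm{sym}}(u,u^{\ast}) = \langle \nabla\cP(u^{\ast}) - \nabla\cP(u),\, u^{\ast} - u\rangle$. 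This part is entirely formal and uses nothing beyond the definition of the Bregman distance.

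For the inequality, I would apply the $q$-convexity estimate (\ref{q_convexity}) of Definition \ref{total_convexity} to each summand of $D_{\cP}^{\mathrm{sym}}$ separately. Pointwise $q$-convexity at $u^{\ast}$ yields $D_{\cP}(u,u^{\ast}) \geq c^{\ast}\norm{u - u^{\ast}}_{\cV}^{q}$, and by the same argument applied with the reference point shifted to $u$, one obtains $D_{\cP}(u^{\ast},u) \geq c^{\ast}\norm{u - u^{\ast}}_{\cV}^{q}$ on the same ball $\{\norm{u-u^{\ast}}_{\cV} \le M\}$. Summing these two estimates produces $D_{\cP}^{\mathrm{sym}}(u,u^{\ast}) \geq 2c^{\ast}\norm{u - u^{\ast}}_{\cV}^{q}$, and specialising to $q = 2$ — the strong-convexity setting that the paper focuses on from the abstract onwards — this is precisely the asserted lower bound, after relabelling the multiplicative constant.

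The one subtle point, and the place where care is needed, is that (\ref{q_convexity}) is formulated as a pointwise condition at the single reference point $u^{\ast}$, whereas the symmetric distance treats $u$ and $u^{\ast}$ symmetrically. To bound both summands one must therefore tacitly presuppose $q$-convexity at both endpoints, which is compatible with the standing framework of a uniform convexity modulus on $\cV$ adopted later when the general analysis is specialised to the smoothed-TV functional $J_{\beta}^{\mathrm{TV}}$. For $q > 2$ the resulting bound $\norm{u-u^{\ast}}_{\cV}^{q}$ is not directly comparable to $\norm{u-u^{\ast}}_{\cV}^{2}$ on the whole ball of radius $M$, so writing the conclusion in the form stated is really a statement about the strongly convex ($q=2$) regime — which matches the overall emphasis of the paper.
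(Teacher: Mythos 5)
Your argument is correct and is essentially the paper's own proof, which simply combines the definition of the symmetric Bregman distance in (\ref{symmetrical_bregman}) with the $q$-convexity estimate (\ref{q_convexity}); you merely spell out the algebraic identity and the two one-sided bounds explicitly. Your closing caveats — that one needs $q$-convexity at both reference points and that the stated exponent $2$ really corresponds to the $q=2$ case emphasised throughout the paper — are accurate refinements that the paper's one-line proof leaves implicit.
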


\begin{proof}
Proof is a straightforward result of the estimation in (\ref{q_convexity})
and the symmetrical Bregman distance definition given by (\ref{symmetrical_bregman}).
\end{proof}

In Definition \ref{total_convexity} by the estimation in (\ref{q_convexity}),
it has been stated that the norm convergence is guarenteed
in the presence of some positive real valued constant to bound
the Bregman distance, given by (\ref{bregman_divergence_intro}),
from below. It is possible to derive an alternative estimation
to (\ref{q_convexity}), or to well known
Xu-Roach inequalities in \textbf{\cite{XuRoach91}}, 
in the case of $q = 2,$ by making further assumption about the functional $\cP$
which is strong convexity with modulus $c,$
\textbf{\cite[Definition 10.5]{BauschkeCombettes11}}.
Below, we formulate the first result of this work which is the base
of our $\cL^2$ norm estimations in the analysis. 
We introduce another notation before giving our
formulation. From some reflexive Banach space $\cV$ to $\R,$
let $A,B : \cV \rightarrow \R$ and $A,B \in \cL(\R).$
Then $A \succ B $ means that $ \langle h, (A - B)h \rangle \geq 0$
for all $h \in \cV.$

\begin{propn}
\label{proposition_q-convexity}
Over the compact and convex domain $\Omega,$
let $\cP : \cL^2(\Omega) \subset \cV \rightarrow \R \cup \{ \infty \}$ 
be some strongly convex and 
twice continuously differentiable functional. 
Then the Bregman distance $D_{\cP}$ can be bounded
below by

\bea
\label{lower_bound_for_bregman}
D_{\cP}(u , v) \geq c \norm{u - v}_{\cL^2(\Omega)}^2 ,
\eea
where the modulus of convexity $c > 0$ satisfies 
$\frac{1}{2} \cP^{\prime\prime} \succ cI.$

\end{propn}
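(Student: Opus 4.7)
The plan is to obtain the lower bound \eqref{lower_bound_for_bregman} from a second-order Taylor expansion of $\cP$ combined with the pointwise Hessian estimate $\frac{1}{2}\cP^{\prime\prime}\succ cI$. Since $\cP$ is assumed to be twice continuously Fr\'echet differentiable on a convex domain containing the segment $[v,u]$, the fundamental theorem of calculus applied twice yields the integral remainder form
\begin{equation}
\cP(u)-\cP(v)-\langle \nabla\cP(v),u-v\rangle=\int_{0}^{1}(1-t)\,\bigl\langle u-v,\cP^{\prime\prime}(v+t(u-v))(u-v)\bigr\rangle\,dt,
\nonumber
\end{equation}
and the left-hand side is precisely $D_{\cP}(u,v)$ by definition \eqref{bregman_divergence_intro}.

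Next I would invoke the strong convexity hypothesis. The assumption $\frac{1}{2}\cP^{\prime\prime}\succ cI$, interpreted in the sense introduced in the paragraph preceding the proposition, means $\langle h,\cP^{\prime\prime}(w)h\rangle\geq 2c\,\|h\|_{\cL^{2}(\Omega)}^{2}$ for every admissible $w\in\Omega$ and every direction $h\in\cV$. Inserting this bound with $h=u-v$ and $w=v+t(u-v)$ under the integral gives
\begin{equation}
D_{\cP}(u,v)\geq 2c\,\|u-v\|_{\cL^{2}(\Omega)}^{2}\int_{0}^{1}(1-t)\,dt=c\,\|u-v\|_{\cL^{2}(\Omega)}^{2},
\nonumber
\end{equation}
which is the desired inequality.

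The main subtlety, and what I expect to require the most care, is justifying that the Hessian inequality $\frac{1}{2}\cP^{\prime\prime}\succ cI$ may in fact be evaluated along the whole segment $\{v+t(u-v):t\in[0,1]\}\subset\Omega$ rather than only at fixed base points $u$ and $v$. Convexity of $\Omega$ guarantees that the segment lies in the domain, and the continuity of $\cP^{\prime\prime}$ then ensures measurability of the integrand so that the application of the pointwise bound under the integral sign is legitimate. A secondary, purely formal point is to make sure that the remainder identity used in the first step holds in the Banach setting; this follows from applying the scalar fundamental theorem of calculus to the auxiliary function $\phi(t):=\cP(v+t(u-v))$, whose first two derivatives are $\phi'(t)=\langle\nabla\cP(v+t(u-v)),u-v\rangle$ and $\phi''(t)=\langle u-v,\cP^{\prime\prime}(v+t(u-v))(u-v)\rangle$, together with integration by parts.
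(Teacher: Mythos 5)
Your proof is correct and, if anything, more rigorous than the paper's own argument, though both are Taylor-based. The paper expands $\cP$ to second order at the base point $v$, keeps the Hessian only at $v$, and lumps everything else into a remainder $o(\norm{u-v}_{\cL^2(\Omega)}^2)$ whose positivity it then simply asserts ("since $\cP$ is strictly convex and $o(\norm{u-v}_{\cL^2(\Omega)}^2)>0$"); that sign claim is not justified for a third-order remainder and is the weak point of the published proof. You instead use the exact integral form of the remainder, $D_{\cP}(u,v)=\int_0^1(1-t)\,\langle u-v,\cP^{\prime\prime}(v+t(u-v))(u-v)\rangle\,dt$, and apply the uniform Hessian bound $\langle h,\cP^{\prime\prime}(w)h\rangle\geq 2c\,\norm{h}_{\cL^2(\Omega)}^2$ at every point of the segment; the factor $\int_0^1(1-t)\,dt=\tfrac12$ then yields $c\,\norm{u-v}_{\cL^2(\Omega)}^2$ with no appeal to the sign of higher-order terms. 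What your route makes explicit, and the paper leaves implicit, is that $\tfrac12\cP^{\prime\prime}\succ cI$ must hold along the whole segment, which is the natural uniform reading of the hypothesis. One small slip: the segment $\{v+t(u-v):t\in[0,1]\}$ lives in the domain of $\cP$, i.e.\ in $\cL^2(\Omega)$, which is a vector space and hence convex automatically; the convexity of the spatial domain $\Omega$ plays no role in this step (a confusion partly inherited from the statement itself), and the continuity of $\cP^{\prime\prime}$ is indeed what legitimizes integrating the pointwise bound.
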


\begin{proof}

Let us begin with considering the Taylor expansion of $\cP,$
\beq
\cP(u) = \cP(v) + \langle \cP^{\prime}(v) , u - v \rangle + 
\frac{1}{2} \langle \cP^{\prime\prime}(v)(u - v) , u - v \rangle + o(\norm{u - v}_{\cL^2(\Omega)}^2) .
\eeq
where $o(\norm{u - v}_{\cL^(\Omega)}^2) := R^{2}(u - v)$ is the remainder 
given in the integral form by
\begin{displaymath}
R^{2}(u - v) = \frac{1}{6}\int_{0}^{1} \cP^{\prime\prime\prime}(v + t(u-v)) \cdot \left((1-t)(u - v)\right)^2(u-v) dt .
\end{displaymath}
Then the Bregman distance reads

\bea
D_{\cP}(u , v) & = & 
\cP(u) - \cP(v) - \langle \cP^{\prime}(v) , u - v \rangle
\nonumber\\
& = & \langle \cP^{\prime}(v) , u - v \rangle + 
\frac{1}{2} \langle \cP^{\prime\prime}(v)(u - v) , u - v \rangle + o(\norm{u - v}_{\cL^2(\Omega)}^2) 
- \langle \cP^{\prime}(v) , u - v \rangle
\nonumber\\
& = & \frac{1}{2} \langle \cP^{\prime\prime}(v)(u - v) , 
u - v \rangle + o(\norm{u - v}_{\cL^2(\Omega)}^2) .
\nonumber
\eea
Since $\cP$ is strictly convex and $o(\norm{u - v}_{\cL^2(\Omega)}^2) > 0,$ 
due to strong convexity, one eventually obtains that

\bea
D_{\cP}(u , v) \geq c \norm{u - v}_{\cL^2(\Omega)}^2 ,
\eea
where $c$ is the modulus of convexity.

\end{proof}


\subsection[Choice of regularization parameter]
{Choice of regularization parameter with Morozov's discrepancy principle}
\label{choice_of_regpar}

We are also concerned with asymptotic properties of the
regularization parameter $\alpha$ for the Tikhonov-regularized solution
obtained by Morozov's discrepancy principle.
Morozov's discrepancy principle (MDP) serves as an \textit{a posteriori}
parameter choice rule for the Tikhonov type cost functionals  
(\ref{cost_functional}) and has certain impact 
on the convergence of the regularized solution for the problem in 
(\ref{problem1}) with some general convex penalty term $J.$
As has been introduced in 
\textbf{\cite[Theorem 3.10]{AnzengruberRamlau10}} and 
\textbf{\cite{AnzengruberRamlau11}},
we will make use of the following set notations
in the theorem formulations that are necessary
to prove the norm convergence of the solution
$\varphi_{\alpha(\delta , f^{\delta})}$ to the true solution
$\varphi^{\dagger}$ for the problem (\ref{problem1}).

\bea
\label{MDP1}
\overline{S} & := & \left\{ \alpha : \norm{\cT\varphi_{\alpha(\delta)} - f^{\delta}}_{\cL^2(\cZ)} \leq \overline{\tau}\delta \mbox{ for some } 
\varphi_{\alpha(\delta)} \in \argmin_{\varphi \in \cV} \{ F_{\alpha}(\varphi , f^{\delta})\} \right\} ,
\\
\label{MDP2}
\underline{S} & := & \left\{ \alpha : \underline{\tau}\delta \leq \norm{\cT\varphi_{\alpha(\delta)} - f^{\delta}}_{\cL^2(\cZ)} \mbox{ for some } 
\varphi_{\alpha(\delta)} \in \argmin_{\varphi \in \cV} \{ F_{\alpha}(\varphi , f^{\delta})\} \right\} ,
\eea
where $1 \leq \underline{\tau} \leq \overline{\tau}$ are fixed.
Analogously, as well known from
\textbf{\cite[Eq. (4.57) and (4.58)]{Engl96}},
\textbf{\cite[Definition 2.3]{Kirsch11}},
in order to obtain tight rates of convergence of 
$\norm{\varphi_{\alpha(\delta)} - \varphi^{\dagger}}$
we are interested in sucha a regularization parameter $\alpha(\delta , f^{\delta}),$ 
with some fixed $1 \leq \underline{\tau} \leq \overline{\tau},$ that

\beq
\label{discrepancy_pr_definition}
\alpha(\delta , f^{\delta}) \in \{ \alpha > 0 \mbox{ }\vert \mbox{ }
\underline{\tau}\delta \leq \norm{\cT\varphi_{\alpha(\delta , f^{\delta})} - f^{\delta}}_{\cL^2(\cZ)} \leq \overline{\tau}\delta , 
\mbox{ for all given } (\delta , f^{\delta}) \} .
\eeq


\section{Variational Convergence Analysis}
\label{section_norm_convergence} 

Due to sophisticated nature of the TV penalty term 
in convex/non-convex minimization problems, variational
inequalities in convergence analysis
for the minimization problems in the form of (\ref{problem1})
is useful. The title name of this section solely expresses the duty
of the variational inequalities in convergence analysis. As alternative 
to well established Tikhonov regularization, 
\textbf{\cite{Tikhonov63, TikhonovArsenin77}},
studying convex regularization strategy 
has been initiated by introducing a new image denoising method
named as {\em total variation}, \textbf{\cite{RudinOsherFatemi92}}.
Particularly, formulating the minimization problem as variational problem and
estimating convergence rates with considering source conditions in variational inequalities 
has also become popular recently, 
\textbf{\cite{BurgerOsher04, Grasmair10, Grasmair13, GrasmairHaltmeierScherzer11, Lorenz08}}
and references therein.

Recall the facts that classical deterministic noise model 
$f^{\delta} \in \cB_{\delta}(f^{\dagger})$
and the $2-$convexity of the penalty term of our minimization problem (\ref{problem1})
are taken into account throughout our analysis. 
Under some {\em a posteriori} strategy 
together with the aforementioned assumptions, 
we will quantify the following rates;

\begin{enumerate}

\item $\cT\varphi_{\alpha(\delta , f^{\delta})} \in \cB_{o(\delta)}(\cT\varphi^{\dagger});$
the discrepancy between 
$\cT\varphi_{\alpha(\delta , f^{\delta})}$ and $\cT\varphi^{\dagger}$
by the rate of $o(\delta).$

\item $D_{J}(\varphi_{\alpha(\delta , f^{\delta})} , \varphi^{\delta}) \leq O(\Psi(\delta));$
upper bound for the Bregman distance $D_J,$ which will immediately imply
the desired norm convergence 
$\Vert\varphi_{\alpha(\delta , f^{\delta})} - \varphi^{\dagger}\Vert_{\cV}.$

\item $\varphi_{\alpha(\delta , f^{\delta})} \in \cB_{o(\Psi(\delta))}(\varphi^{\dagger});$
convergence of the regularized solution 
$\varphi_{\alpha(\delta , f^{\delta})}$ 
to the true solution $\varphi^{\dagger}$ by the rate of
the index function $O(\Psi(\delta)).$

\end{enumerate}

\subsection[Choice of the regularization parameter]
{Choice of the regularization parameter with Morozov's discrepancy principle}
\label{choice_of_regpar}

We are also concerned with asymptotic properties of the
regularization parameter $\alpha$ for the Tikhonov-regularized solution
obtained by Morozov's discrepancy principle.
Morozov's discrepancy principle (MDP) serves as an \textit{a posteriori}
parameter choice rule for the Tikhonov type cost functionals  
(\ref{cost_functional}) and has certain impact 
on the convergence of the regularized solution for the problem in 
(\ref{problem1}) with some general convex penalty term $J.$
As has been introduced in 
\textbf{\cite[Theorem 3.10]{AnzengruberRamlau10}} and 
\textbf{\cite{AnzengruberRamlau11}},
we will make use of the following set notations
in the theorem formulations that are necessary
to prove the norm convergence of the solution
$\varphi_{\alpha(\delta , f^{\delta})}$ to the true solution
$\varphi^{\dagger}$ for the problem (\ref{problem1}).

\bea
\label{MDP1}
\overline{S} & := & \left\{ \alpha : \norm{\cT\varphi_{\alpha(\delta)} - f^{\delta}}_{\cL^2(\cZ)} \leq \overline{\tau}\delta \mbox{ for some } 
\varphi_{\alpha(\delta)} \in \argmin_{\varphi \in \cV} \{ F_{\alpha}(\varphi , f^{\delta})\} \right\} ,
\\
\label{MDP2}
\underline{S} & := & \left\{ \alpha : \underline{\tau}\delta \leq \norm{\cT\varphi_{\alpha(\delta)} - f^{\delta}}_{\cL^2(\cZ)} \mbox{ for some } 
\varphi_{\alpha(\delta)} \in \argmin_{\varphi \in \cV} \{ F_{\alpha}(\varphi , f^{\delta})\} \right\} ,
\eea
where $1 \leq \underline{\tau} \leq \overline{\tau}$ are fixed.
Analogously, as well known from
\textbf{\cite[Eq. (4.57) and (4.58)]{Engl96}},
\textbf{\cite[Definition 2.3]{Kirsch11}},
in order to obtain tight rates of convergence of 
$\norm{\varphi_{\alpha(\delta)} - \varphi^{\dagger}}$
we are interested in such a regularization parameter $\alpha(\delta , f^{\delta}),$ 
with some fixed $1 \leq \underline{\tau} \leq \overline{\tau},$ that

\beq
\label{discrepancy_pr_definition}
\alpha(\delta , f^{\delta}) \in \{ \alpha > 0 \mbox{ }\vert \mbox{ }
\underline{\tau}\delta \leq \norm{\cT\varphi_{\alpha(\delta , f^{\delta})} - f^{\delta}}_{\cL^2(\cZ)} \leq \overline{\tau}\delta \} 
\mbox{ for all given } (\delta , f^{\delta}) .
\eeq


\subsection{Variational inequalities for norm convergence}
\label{variational_ineq}

Convergence rates results for some general operator $\cT$
can be obtained by formulating {\em variational inequality}
which uses the concept of index functions.  A function
$\Psi : [0 , \infty) \rightarrow [0 , \infty)$
is called {\em index function} if it is continuously defined, 
monotonically increasing and $\Psi(0) = 0.$

\begin{assump}
\label{assump_variational_ineq}
\textbf{[Variational Inequality]}\textbf{\cite[Eq. 1]{Grasmair10}, \cite[Eq 1.5]{HofmannMathe12}, \cite[Eq 2]{HohageWeidling15}}
There exists some constant $\tilde{\gamma} \in (0 , 1]$ 
and an index function $\Psi,$ for all 
$\varphi \in  \cD(\cT) , $ such that
 
\bea
\label{variational_ineq}
\tilde{\gamma}\norm{\varphi - \varphi^{\dagger}}_{\cL^{2}(\Omega)}^{2} \leq J(\varphi) - J(\varphi^{\dagger})
+ \Psi\left( \norm{\cT\varphi - \cT\varphi^{\dagger}}_{\cL^{2}(\cZ)} \right) .
\eea 

\end{assump}

\begin{lemma}
\label{grad_penalty_upper_bound}
For the cost functional defined by

\begin{displaymath}
F_{\alpha}(\varphi , f^{\delta}) := 
\frac{1}{2}\norm{\cT\varphi - f^{\delta}}_{\cL^2(\cZ)}^{2} + \alpha J(\varphi),
\end{displaymath}
with some Fr\'{e}chet differentiable and convex penalty term $J : \cV \rightarrow \R,$
that is defined on a Hilbert space $\cV,$ $J : \cV \rightarrow \R,$
let $\varphi_{\alpha} \in \argmin_{\varphi \in \cV} \{F_{\alpha}(\varphi , f^{\delta})\}.$
Then for all $\varphi \in \cD(\cT) \subset \cV$ and any regularization parameter $\alpha > 0,$

\bea
\label{upper_bound}
\alpha \langle \nabla J(\varphi) , \varphi_{\alpha} - \varphi \rangle
\leq \langle \cT^{\ast}(\cT\varphi - f^{\delta}) ,  \varphi - \varphi_{\alpha} \rangle .
\eea
\end{lemma}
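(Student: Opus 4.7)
The plan is to chain together three standard inequalities: the subgradient inequality for $J$, the minimizing property of $\varphi_\alpha$, and the subgradient inequality for the smooth quadratic data fidelity term $\varphi \mapsto \tfrac{1}{2}\|\cT\varphi - f^\delta\|_{\cL^2(\cZ)}^2$. The scalar product $\alpha\langle\nabla J(\varphi),\varphi_\alpha-\varphi\rangle$ on the left is exactly what the convexity of $J$ linearises at $\varphi$; the right-hand side is exactly what the convexity of the data term linearises at $\varphi$. The minimality of $\varphi_\alpha$ is what connects the two.

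First I would use the Fr\'{e}chet differentiability and convexity of $J$ to write
\begin{displaymath}
J(\varphi_\alpha) \geq J(\varphi) + \langle \nabla J(\varphi),\, \varphi_\alpha - \varphi \rangle,
\end{displaymath}
so that $\alpha\langle \nabla J(\varphi), \varphi_\alpha - \varphi\rangle \leq \alpha\bigl(J(\varphi_\alpha) - J(\varphi)\bigr)$. Second, since $\varphi_\alpha$ minimises $F_\alpha(\cdot, f^\delta)$ over $\cV$, the inequality $F_\alpha(\varphi_\alpha, f^\delta) \leq F_\alpha(\varphi, f^\delta)$ rearranges to
\begin{displaymath}
\alpha\bigl(J(\varphi_\alpha) - J(\varphi)\bigr) \leq \tfrac{1}{2}\|\cT\varphi - f^\delta\|_{\cL^2(\cZ)}^2 - \tfrac{1}{2}\|\cT\varphi_\alpha - f^\delta\|_{\cL^2(\cZ)}^2.
\end{displaymath}
Third, the quadratic data fidelity $g(\varphi):= \tfrac{1}{2}\|\cT\varphi - f^\delta\|_{\cL^2(\cZ)}^2$ is convex and Fr\'{e}chet differentiable with $\nabla g(\varphi) = \cT^\ast(\cT\varphi - f^\delta)$, so the subgradient inequality at $\varphi$ applied to $\varphi_\alpha$ gives
\begin{displaymath}
g(\varphi_\alpha) \geq g(\varphi) + \langle \cT^\ast(\cT\varphi - f^\delta),\, \varphi_\alpha - \varphi\rangle,
\end{displaymath}
which is equivalent to
\begin{displaymath}
\tfrac{1}{2}\|\cT\varphi - f^\delta\|_{\cL^2(\cZ)}^2 - \tfrac{1}{2}\|\cT\varphi_\alpha - f^\delta\|_{\cL^2(\cZ)}^2 \leq \langle \cT^\ast(\cT\varphi - f^\delta),\, \varphi - \varphi_\alpha\rangle.
\end{displaymath}
Concatenating the three displayed inequalities yields exactly (\ref{upper_bound}).

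There is no real obstacle here; the statement is essentially a repackaging of the first-order optimality condition (\ref{optimality_1}) as a variational inequality valid against arbitrary comparison elements $\varphi\in\cD(\cT)$. The only points worth a sentence of care are (i) that $\varphi_\alpha \in \cV$ is admissible as a test point in the subgradient inequality for $J$ (it is, since $J$ is defined on all of $\cV$), and (ii) that both inner products live naturally in the Hilbert space pairings already used, so no duality bracket distinction is needed. The proof is therefore essentially the chain displayed above, with a final one-line summary restating the combined estimate.
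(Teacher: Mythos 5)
Your proof is correct and follows essentially the same route as the paper: minimality of $\varphi_{\alpha}$ combined with the first-order convexity (subgradient) inequality taken at the comparison point $\varphi$. The paper merely packages your three steps into one by using $0 \leq D_{F_{\alpha}}(\varphi_{\alpha},\varphi)$ for the full cost functional and then expanding $\nabla F_{\alpha}(\varphi) = \cT^{\ast}(\cT\varphi - f^{\delta}) + \alpha \nabla J(\varphi)$, whereas you apply the convexity inequality to $J$ and to the data-fidelity term separately and chain them through the minimality estimate.
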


\begin{proof}

Since $\varphi_{\alpha}$ is the minimum of the cost functional $F_{\alpha}$
then, it is hold that  
$F_{\alpha}(\varphi_{\alpha} , f^{\delta}) \leq F_{\alpha}(\varphi , f^{\delta})$ 
for all $\varphi \in \cD(\cT) \subset \cV$ and $\alpha > 0.$
Now, recall the Bregman distance formulation associated with the cost functional 
in (\ref{bregman_def_cost_func}).

\bea
0 \leq D_{F_{\alpha}}(\varphi_{\alpha} , \varphi) & = &
F_{\alpha}(\varphi_{\alpha}) - F_{\alpha}(\varphi) - \langle \nabla F_{\alpha}(\varphi) , \varphi_{\alpha} - \varphi \rangle
\nonumber\\
& \leq & - \langle \nabla F_{\alpha}(\varphi) , \varphi_{\alpha} - \varphi \rangle 
\nonumber\\
& = & \langle \nabla F_{\alpha}(\varphi) , \varphi - \varphi_{\alpha} \rangle
\eea
We, by the definition of the cost functional $F_{\alpha}$ in (\ref{cost_functional}),
have that

\bea
0 \leq \langle \cT^{\ast}(\cT\varphi - f^{\delta}) + \alpha \nabla J(\varphi), 
\varphi - \varphi_{\alpha} \rangle  ,
\eea
which yields the assertion.
\end{proof}

It is also an immediate consequence of MDP, see
\textbf{\cite[Remark 2.7]{AnzengruberRamlau11}},
that

\bea
\label{consequence_MDP}
\norm{\cT\varphi_{\alpha(\delta)} - \cT\varphi^{\dagger}}_{\cL^2(\cZ)} \leq (\overline{\tau} + 1)\delta .
\eea
We use this observation to formulate the following theorem.
The first assertion below is an expected result
for minimization problems given by (\ref{problem1}),
see \textit{e.g.} \textbf{\cite[Lemma 1]{HofmannMathe12}}.

\begin{theorem}
\label{Bregman_regularizer_upper_bound2}
Under the same assumption in Lemma \ref{grad_penalty_upper_bound}
together with $\norm{\cT\varphi^{\dagger} - f^{\delta}}_{\cL^{2}(\cZ)} \leq \delta,$ 
then we, for any $\alpha > 0,$ have that

\bea
\label{pointwise_conv_penalty}
J(\varphi_{\alpha}) - J(\varphi^{\dagger}) & \leq & \frac{\delta^2}{2\alpha} .
\eea
Moreover, for $\alpha(\delta , f^{\delta}) \in \overline{S},$ the Bregman distance
$D_J$ is bounded above by

\bea
\label{bregman_upper_bound}
D_{J}(\varphi_{\alpha(\delta , f^{\delta})} , \varphi^{\dagger}) & \leq & 
\frac{\delta^2}{\alpha(\delta , f^{\delta})}\left( \frac{3}{2} + \overline{\tau}\right).
\eea
\end{theorem}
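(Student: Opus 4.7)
The plan is to prove the two inequalities in order, using the first as a stepping stone for the second.

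For (\ref{pointwise_conv_penalty}), I would exploit the defining optimality $F_{\alpha}(\varphi_{\alpha}, f^{\delta}) \leq F_{\alpha}(\varphi^{\dagger}, f^{\delta})$, which rearranges into
\begin{displaymath}
\alpha \bigl[J(\varphi_{\alpha}) - J(\varphi^{\dagger})\bigr] \leq \tfrac{1}{2}\norm{\cT\varphi^{\dagger} - f^{\delta}}_{\cL^{2}(\cZ)}^{2} - \tfrac{1}{2}\norm{\cT\varphi_{\alpha} - f^{\delta}}_{\cL^{2}(\cZ)}^{2}.
\end{displaymath}
Dropping the nonnegative residual on the right and invoking the deterministic noise bound $\norm{\cT\varphi^{\dagger} - f^{\delta}}_{\cL^{2}(\cZ)} \leq \delta$ gives $\alpha[J(\varphi_{\alpha}) - J(\varphi^{\dagger})] \leq \delta^{2}/2$; division by $\alpha$ yields the first assertion.

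For (\ref{bregman_upper_bound}), I would start from definition (\ref{bregman_def_regularizer}),
\begin{displaymath}
D_{J}(\varphi_{\alpha(\delta, f^{\delta})}, \varphi^{\dagger}) = \bigl[J(\varphi_{\alpha(\delta, f^{\delta})}) - J(\varphi^{\dagger})\bigr] - \scal{\nabla J(\varphi^{\dagger})}{\varphi_{\alpha(\delta, f^{\delta})} - \varphi^{\dagger}},
\end{displaymath}
and bound the two pieces separately. The first bracket is already controlled by (\ref{pointwise_conv_penalty}) with value $\delta^{2}/(2\alpha(\delta, f^{\delta}))$, producing the $\tfrac{1}{2}$ in the target constant. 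For the cross-term, Lemma~\ref{grad_penalty_upper_bound} applied at $\varphi = \varphi^{\dagger}$ produces
\begin{displaymath}
\alpha(\delta, f^{\delta}) \scal{\nabla J(\varphi^{\dagger})}{\varphi_{\alpha(\delta, f^{\delta})} - \varphi^{\dagger}} \leq \scal{\cT\varphi^{\dagger} - f^{\delta}}{\cT\varphi^{\dagger} - \cT\varphi_{\alpha(\delta, f^{\delta})}}.
\end{displaymath}
Combining Cauchy--Schwarz on this inner product with $\norm{\cT\varphi^{\dagger} - f^{\delta}}_{\cL^{2}(\cZ)} \leq \delta$ and the MDP consequence (\ref{consequence_MDP}), which for $\alpha(\delta, f^{\delta}) \in \overline{S}$ gives $\norm{\cT\varphi_{\alpha(\delta, f^{\delta})} - \cT\varphi^{\dagger}}_{\cL^{2}(\cZ)} \leq (\overline{\tau} + 1)\delta$, produces the estimate $(\overline{\tau} + 1)\delta^{2}$ in absolute value. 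This contributes $(\overline{\tau} + 1)\delta^{2}/\alpha(\delta, f^{\delta})$ to the final bound; summing the two pieces yields the constant $\tfrac{3}{2} + \overline{\tau}$.

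The main subtlety, which I would verify most carefully against the author's intended derivation, is the handling of the sign in the cross-term. Lemma~\ref{grad_penalty_upper_bound} furnishes only a one-sided estimate on $\alpha(\delta, f^{\delta}) \scal{\nabla J(\varphi^{\dagger})}{\varphi_{\alpha(\delta, f^{\delta})} - \varphi^{\dagger}}$, whereas the calculation above requires an upper bound on the negation of that inner product. The cleanest route is to read the Cauchy--Schwarz estimate on the Lemma's right-hand side as an absolute-value bound, which delivers the required two-sided control once (\ref{consequence_MDP}) forces both $\norm{\cT\varphi^{\dagger} - f^{\delta}}_{\cL^{2}(\cZ)}$ and $\norm{\cT\varphi_{\alpha(\delta, f^{\delta})} - \cT\varphi^{\dagger}}_{\cL^{2}(\cZ)}$ to be of order $\delta$, and this is precisely where the MDP assumption $\alpha(\delta, f^{\delta}) \in \overline{S}$ does essential work.
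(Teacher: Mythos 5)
Your argument reproduces the paper's proof essentially verbatim: the minimizer comparison $F_{\alpha}(\varphi_{\alpha},f^{\delta})\leq F_{\alpha}(\varphi^{\dagger},f^{\delta})$ for (\ref{pointwise_conv_penalty}), and then the splitting of $D_{J}(\varphi_{\alpha(\delta,f^{\delta})},\varphi^{\dagger})$ into the penalty difference (controlled by the first assertion) plus the cross-term, the latter estimated via Lemma~\ref{grad_penalty_upper_bound} at $\varphi=\varphi^{\dagger}$, Cauchy--Schwarz, the noise level $\delta$, and the MDP consequence (\ref{consequence_MDP}), giving exactly the constant $\tfrac{3}{2}+\overline{\tau}$. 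The sign subtlety you flag is genuine, but the paper handles it precisely the way you propose: it bounds $D_{J}\leq \vert J(\varphi_{\alpha(\delta,f^{\delta})})-J(\varphi^{\dagger})\vert+\vert\langle\nabla J(\varphi^{\dagger}),\varphi_{\alpha(\delta,f^{\delta})}-\varphi^{\dagger}\rangle\vert$ and then estimates the second absolute value by $\tfrac{\delta}{\alpha}\norm{\cT\varphi_{\alpha(\delta,f^{\delta})}-\cT\varphi^{\dagger}}_{\cL^{2}(\cZ)}$ as though the Lemma gave two-sided control, so your proposal matches the paper's derivation including this step; note only that reading Cauchy--Schwarz on the right-hand side as an absolute-value bound does not by itself flip the one-sided inequality on the left (from $A\leq B$ and $\vert B\vert\leq C$ one gets $A\leq C$, not $\vert A\vert\leq C$), so a fully airtight version of this step would require an additional ingredient such as a source condition $\nabla J(\varphi^{\dagger})=\cT^{\ast}\omega$ --- a remark that applies to the paper's own proof as much as to yours.
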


\begin{proof}
Since $\varphi_{\alpha},$ for any $\alpha > 0,$ 
is the minimizer of the cost functional $F_{\alpha},$ then

\bea
F_{\alpha}(\varphi_{\alpha} , f^{\delta}) & = & 
\frac{1}{2}\norm{\cT \varphi_{\alpha} - f^{\delta}}_{\cL^{2}(\cZ)}^2 
+ \alpha J(\varphi_{\alpha})
\nonumber\\
& \leq & \frac{1}{2}\norm{\cT \varphi^{\dagger} - f^{\delta}}_{\cL^{2}(\cZ)}^2 + 
\alpha J(\varphi^{\dagger}) = F_{\alpha}(\varphi^{\dagger} , f^{\delta}),
\nonumber
\eea
which is in other words,

\beq
\alpha (J(\varphi_{\alpha}) - J(\varphi^{\dagger}))
\leq \frac{1}{2} \norm{\cT \varphi^{\dagger} - f^{\delta}}_{\cL^{2}(\cZ)}^2 - 
\frac{1}{2} \norm{\cT \varphi_{\alpha} - f^{\delta}}_{\cL^{2}(\cZ)}^2 .
\eeq
By the assumed deterministic noise model $\norm{\cT\varphi^{\dagger} - f^{\delta}}_{\cL^{2}(\cZ)} \leq \delta$
and the fact that $\norm{\cT \varphi_{\alpha} - f^{\delta}}_{\cL^{2}(\cZ)} > 0,$
one obtains the first assertion

\bea
J(\varphi_{\alpha}) - J(\varphi^{\dagger}) \leq \frac{\delta^2}{2\alpha} .
\nonumber
\eea
Regarding second assertion, since $\alpha(\delta , f^{\delta}) \in \overline{S},$
by the definition in (\ref{MDP1}), 
$\norm{\cT\varphi_{\alpha(\delta , f^{\delta})} - f^{\delta}}_{\cL^2(\cZ)} \leq \overline{\tau}\delta.$
From the formulation of Bregman distance (\ref{bregman_def_regularizer}) and 
Lemma \ref{grad_penalty_upper_bound}, we obtain

\bea
D_{J}(\varphi_{\alpha(\delta , f^{\delta})} , \varphi^{\dagger}) & \leq & 
\left\vert J(\varphi_{\alpha(\delta , f^{\delta})}) - J(\varphi^{\dagger}) \right\vert + 
\left\vert \langle \nabla J(\varphi^{\dagger}) , \varphi_{\alpha(\delta , f^{\delta})} - \varphi^{\dagger} \rangle \right\vert
\nonumber\\
& \leq & \frac{\delta^2}{2\alpha(\delta , f^{\delta})} + \frac{\delta}{\alpha(\delta , f^{\delta})} \norm{\cT\varphi_{\alpha(\delta , f^{\delta})} - \cT\varphi^{\dagger}}_{\cL^{2}(\cZ)} .
\nonumber
\eea
Hence, the observation in (\ref{consequence_MDP})
yields the second assertion.

\end{proof}

Obtaining tight rates of convergence with an {\em a posteriori}
strategy for the choice of regularization parameter 
$\alpha = \alpha(\delta , f^{\delta})$ is the aim of this chapter.
Henceforth, we will show the impact of this strategy on
the convergence and convergence rates by 
associating it with the index function $\Psi$ that has appeared
in Assumption \ref{assump_variational_ineq}. 
In \textbf{\cite[Eq (3.2)]{HofmannMathe12}}, a reasonable 
index function has been introduced.  We, in analogous with
that function in the regarding work, introduce

\bea
\label{index_function}
\Phi(\delta , f^{\delta}) := \frac{\delta^2}{2\Psi( \delta )} .
\eea
From this index function,
it is possible to be able to formulate an improved 
counterpart of the result in \textbf{\cite[Corollary 1]{HofmannMathe12}}.
Firstly, we give a preliminary estimate result based on the variational
inequality.

\begin{lemma}\textbf{\cite[Lemma 2]{HofmannMathe12}}
\label{HomannMathe12_Lemma2}
Let, for some $\alpha,$
$\varphi_{\alpha} \in \argmin_{\varphi \in \cV} \{F_{\alpha}(\varphi , f^{\delta})\}$
satisfy Assumption (\ref{assump_variational_ineq}). Then

\bea
\norm{\cT\varphi_{\alpha} - \cT\varphi^{\dagger}}_{\cL^2({\cZ})}^2 \leq 4\delta^2 + 
4 \alpha \Psi\left( \norm{\cT\varphi_{\alpha}  - \cT\varphi^{\dagger}}_{\cL^{2}(\cZ)} \right) ,
\nonumber
\eea
where $\varphi^{\dagger} \in \cD(\cT)$ is the true solution for the problem (\ref{problem1}).
\end{lemma}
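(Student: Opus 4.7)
The plan is to combine three ingredients in sequence: the minimality of $\varphi_{\alpha}$ for the Tikhonov functional $F_{\alpha}$, the variational inequality of Assumption \ref{assump_variational_ineq} applied at $\varphi = \varphi_{\alpha}$, and a simple triangle-inequality estimate that converts the residual with respect to the perturbed data $f^{\delta}$ into the residual with respect to the exact image $\cT\varphi^{\dagger}$. None of the three ingredients is deep; the care lies in tracking constants so that one arrives exactly at the factor $4$ appearing in the statement.

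First I would use the defining inequality $F_{\alpha}(\varphi_{\alpha}, f^{\delta}) \leq F_{\alpha}(\varphi^{\dagger}, f^{\delta})$, rearranged as
\begin{equation*}
\tfrac{1}{2}\norm{\cT\varphi_{\alpha} - f^{\delta}}_{\cL^{2}(\cZ)}^{2} \leq \tfrac{1}{2}\norm{\cT\varphi^{\dagger} - f^{\delta}}_{\cL^{2}(\cZ)}^{2} + \alpha\bigl(J(\varphi^{\dagger}) - J(\varphi_{\alpha})\bigr),
\end{equation*}
and then invoke the deterministic noise model $\norm{\cT\varphi^{\dagger} - f^{\delta}}_{\cL^{2}(\cZ)} \leq \delta$ to replace the first term on the right by $\tfrac{1}{2}\delta^{2}$.

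Next I would substitute Assumption \ref{assump_variational_ineq} at $\varphi = \varphi_{\alpha}$, i.e.\ $J(\varphi^{\dagger}) - J(\varphi_{\alpha}) \leq -\tilde{\gamma}\norm{\varphi_{\alpha} - \varphi^{\dagger}}_{\cL^{2}(\Omega)}^{2} + \Psi\bigl(\norm{\cT\varphi_{\alpha} - \cT\varphi^{\dagger}}_{\cL^{2}(\cZ)}\bigr)$, and simply discard the non-positive first term on the right (the place where the $\tilde{\gamma}$-control of $\norm{\varphi_{\alpha}-\varphi^{\dagger}}$ is sacrificed in exchange for the cleaner image-space bound). This yields
\begin{equation*}
\norm{\cT\varphi_{\alpha} - f^{\delta}}_{\cL^{2}(\cZ)}^{2} \leq \delta^{2} + 2\alpha\,\Psi\bigl(\norm{\cT\varphi_{\alpha} - \cT\varphi^{\dagger}}_{\cL^{2}(\cZ)}\bigr).
\end{equation*}

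Finally I would apply the elementary estimate $(a+b)^{2} \leq 2a^{2} + 2b^{2}$ to write
\begin{equation*}
\norm{\cT\varphi_{\alpha} - \cT\varphi^{\dagger}}_{\cL^{2}(\cZ)}^{2} \leq 2\norm{\cT\varphi_{\alpha} - f^{\delta}}_{\cL^{2}(\cZ)}^{2} + 2\norm{f^{\delta} - \cT\varphi^{\dagger}}_{\cL^{2}(\cZ)}^{2},
\end{equation*}
and combine with the previous bound and the noise-level inequality to collect $2\delta^{2} + 2\delta^{2} = 4\delta^{2}$ together with the factor $4\alpha$ in front of $\Psi$, exactly as claimed. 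The main (mild) obstacle is just verifying that the factor of $2$ picked up from the triangle inequality cleanly doubles the constants on both the $\delta^{2}$ term and the $\alpha\Psi(\cdot)$ term; everything else is bookkeeping.
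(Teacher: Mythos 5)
Your argument is correct: minimality of $\varphi_{\alpha}$ plus the noise bound gives $\norm{\cT\varphi_{\alpha}-f^{\delta}}^{2}_{\cL^{2}(\cZ)} \leq \delta^{2}+2\alpha\Psi\bigl(\norm{\cT\varphi_{\alpha}-\cT\varphi^{\dagger}}_{\cL^{2}(\cZ)}\bigr)$ after discarding the nonnegative $\tilde{\gamma}$-term in the variational inequality, and the estimate $(a+b)^{2}\leq 2a^{2}+2b^{2}$ then produces exactly the constants $4\delta^{2}$ and $4\alpha$. The paper itself states this lemma with a citation to \cite{HofmannMathe12} and gives no proof, and your argument is essentially the standard one underlying that cited result, so there is nothing to add.
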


We are now ready to introduce our result which is comparable with 
\textbf{\cite[Corollary 1]{HofmannMathe12}}. In our formulation,
we still follow {\em a posteriori} rule of choice of the regularization
parameter $\alpha = \alpha(\delta , f^{\delta}) \in \overline{S}$ 
as has been introduced in (\ref{MDP1}).

\begin{cor}
\label{HofmannMathe12_Corollary1}
Under the same assumption in Lemma \ref{HomannMathe12_Lemma2},
if the regularization parameter $\alpha(\delta , f^{\delta}) \in \overline{S}$
is chosen as

\beq
\alpha(\delta , f^{\delta}) := \Phi(\delta , f^{\delta}),
\eeq
then we have

\bea
\norm{\cT\varphi_{\alpha(\delta , f^{\delta})} - \cT\varphi^{\dagger}}_{\cL^2(\cZ)} \leq \delta\sqrt{6 + 2\overline{\tau}} ,
\eea
where fixed $1 \leq \overline{\tau}$ satisfies
$\Vert\cT\varphi_{\alpha(\delta , f^{\delta})} - f^{\delta}\Vert_{\cL^2(\cZ)} \leq \overline{\tau}\delta.$
\end{cor}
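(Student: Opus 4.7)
The plan is to combine Lemma \ref{HomannMathe12_Lemma2} (applied at the minimizer $\varphi_{\alpha(\delta,f^\delta)}$) with the a posteriori restriction $\alpha(\delta,f^\delta)\in\overline{S}$ and the noise model $\Vert \cT\varphi^\dagger - f^\delta\Vert_{\cL^2(\cZ)}\leq\delta$. Writing $e := \Vert \cT\varphi_{\alpha(\delta,f^\delta)} - \cT\varphi^\dagger\Vert_{\cL^2(\cZ)}$ for brevity, Lemma \ref{HomannMathe12_Lemma2} immediately supplies
\begin{equation*}
e^{2} \;\leq\; 4\delta^{2} + 4\alpha(\delta,f^\delta)\,\Psi(e).
\end{equation*}
With the specific choice $\alpha(\delta,f^\delta) = \Phi(\delta,f^\delta) = \delta^{2}/(2\Psi(\delta))$ the right-hand term collapses to $2\delta^{2}\Psi(e)/\Psi(\delta)$, so the entire problem reduces to controlling the ratio $\Psi(e)/\Psi(\delta)$ by a constant depending only on $\overline{\tau}$.

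Next I would exploit the discrepancy membership $\alpha(\delta,f^\delta)\in\overline{S}$, which by (\ref{MDP1}) gives $\Vert \cT\varphi_{\alpha(\delta,f^\delta)} - f^\delta\Vert_{\cL^2(\cZ)}\leq\overline{\tau}\delta$. Combined with the noise bound, the triangle inequality produces the coarse estimate $e \leq (\overline{\tau}+1)\delta$. Plain monotonicity of $\Psi$ would only yield $\Psi(e)\leq \Psi((\overline{\tau}+1)\delta)$, which is the wrong direction for comparison against $\Psi(\delta)$. To proceed I would invoke the standing concavity assumption on the index function $\Psi$ advertised in the abstract: concavity together with $\Psi(0)=0$ forces $t\mapsto \Psi(t)/t$ to be non-increasing on $(0,\infty)$, and hence $\Psi(c\delta) \leq c\,\Psi(\delta)$ for every $c\geq 1$. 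Applying this with $c = \overline{\tau}+1$ delivers $\Psi(e)\leq(\overline{\tau}+1)\,\Psi(\delta)$.

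Substituting this back into the Lemma \ref{HomannMathe12_Lemma2} inequality cancels $\Psi(\delta)$ and leaves
\begin{equation*}
e^{2} \;\leq\; 4\delta^{2} + 2\delta^{2}(\overline{\tau}+1) \;=\; (6+2\overline{\tau})\,\delta^{2},
\end{equation*}
and the claim $e\leq\delta\sqrt{6+2\overline{\tau}}$ follows by taking square roots. The main obstacle is the concavity step: Assumption \ref{assump_variational_ineq} as written only requires an index function in the weak sense (continuous, monotone, vanishing at zero), so the subadditive scaling $\Psi(c\delta)\leq c\,\Psi(\delta)$ must be supplied either by augmenting the definition with concavity, as the abstract tacitly does, or by deriving it from the specific penalty $J_\beta^{\mathrm{TV}}$ treated in the second half of the paper. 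Everything else is routine: Lemma \ref{HomannMathe12_Lemma2}, the deterministic noise bound, one triangle inequality, and the substitution $\alpha(\delta,f^\delta)=\delta^{2}/(2\Psi(\delta))$.
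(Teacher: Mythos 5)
Your proposal is correct and follows essentially the same route as the paper's proof: apply Lemma \ref{HomannMathe12_Lemma2}, substitute $\alpha(\delta,f^{\delta})=\delta^{2}/(2\Psi(\delta))$, invoke the concave-index-function scaling property (which the paper imports by citing Proposition 1 of Hofmann--Math\'{e}, confirming your observation that concavity beyond the literal wording of Assumption \ref{assump_variational_ineq} is needed), and finish with the discrepancy/triangle bound $\Vert \cT\varphi_{\alpha(\delta,f^{\delta})}-\cT\varphi^{\dagger}\Vert_{\cL^{2}(\cZ)}\leq(\overline{\tau}+1)\delta$. The only cosmetic difference is the order of the last two steps: the paper first converts $\delta^{2}\Psi(e)/\Psi(\delta)$ into $\delta e$ and then bounds $e$, whereas you bound $e$ first and then scale $\Psi$, which yields the identical constant $6+2\overline{\tau}$.
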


\begin{proof}

By the defined index function in (\ref{index_function}) and
the result in Lemma \ref{HomannMathe12_Lemma2},
we immediately obtain,

\bea
\norm{\cT\varphi_{\alpha(\delta , f^{\delta})} - \cT\varphi^{\dagger}}_{\cL^2(\cZ)}^{2}
& \leq & 4\delta^2 + 4\alpha(\delta , f^{\delta}) \Psi\left(\norm{\cT\varphi_{\alpha(\delta , f^{\delta})} - \cT\varphi^{\dagger}}_{\cL^2(\cZ)}\right)
\nonumber\\
& = ^{\footnotemark} & 4\delta^2 + 2\frac{\delta^2}{\Psi( \delta )} \Psi\left(\frac{\delta}{\delta}\norm{\cT\varphi_{\alpha(\delta , f^{\delta})} - \cT\varphi^{\dagger}}_{\cL^2(\cZ)}\right)
\nonumber\\
& = ^{\footnotemark} & 4\delta^2 + 2\delta \norm{\cT\varphi_{\alpha(\delta , f^{\delta})} - \cT\varphi^{\dagger}}_{\cL^2(\cZ)}
\nonumber\\
& \leq ^{\footnotemark} & 4\delta^2 + 2\delta^2 (\overline{\tau} + 1) = \delta^2(6 + 2\overline{\tau}) .
\nonumber
\eea
\footnotetext[1]{By the given index function $\Phi$ in (\ref{index_function}) and since $\alpha(\delta , f^{\delta}) := \Phi(\delta , f^{\delta}),$ 
the equation follows.}
\footnotetext[2]{See \textbf{\cite[Proposition 1]{HofmannMathe12}}.}
\footnotetext[3]{Since $\alpha(\delta , f^{\delta}) \in \overline{S},$ then 
$\Vert\cT\varphi_{\alpha(\delta , f^{\delta})} - f^{\delta}\Vert_{\cL^2(\cZ)} \leq \overline{\tau}\delta.$
This, by the triangle inequality, implies that 
$\Vert\cT\varphi_{\alpha(\delta , f^{\delta})} - \cT\varphi^{\dagger}\Vert_{\cL^2(\cZ)} \leq (\overline{\tau} + 1)\delta.$}

\end{proof}


With the introduced index function in (\ref{index_function}),
it is essential to be able to find lower bound for 
the regularization parameter $\alpha.$

\begin{cor}
\label{cor_regpar_lower_bound}
Suppose that, for a chosen regularization parameter $\alpha(\delta , f^{\delta}) \in \underline{S}$
that is defined in (\ref{MDP2}),
the regularized solution $\varphi_{\alpha(\delta , f^{\delta})}$
to the problem (\ref{problem1}) 
satisfies the variational inequality in Assumption \ref{assump_variational_ineq}.
Then the regularization parameter can be bounded below as such,

\bea
\frac{1}{2}(\underline{\tau} - 1)^2 \frac{\underline{\tau}^2 - 1}{\underline{\tau}^2 + 1} \Phi(\delta , f^{\delta}) \leq \alpha(\delta , f^{\delta}) .
\eea

\end{cor}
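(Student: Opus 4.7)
The plan is to chain together three ingredients supplied by the preceding material: (a) the minimality $F_{\alpha}(\varphi_{\alpha(\delta,f^{\delta})},f^{\delta})\leq F_{\alpha}(\varphi^{\dagger},f^{\delta})$, (b) the lower discrepancy bound $\underline{\tau}\delta\leq\|\cT\varphi_{\alpha(\delta,f^{\delta})}-f^{\delta}\|_{\cL^{2}(\cZ)}$ coming from $\alpha(\delta,f^{\delta})\in\underline{S}$, and (c) the variational inequality of Assumption~\ref{assump_variational_ineq}. Writing $A:=\|\cT\varphi_{\alpha(\delta,f^{\delta})}-\cT\varphi^{\dagger}\|_{\cL^{2}(\cZ)}$ throughout, the goal is to arrive at a closed scalar inequality of the form $\alpha\geq c(\underline{\tau})\Phi(\delta,f^{\delta})$ with $\Phi(\delta,f^{\delta})=\delta^{2}/(2\Psi(\delta))$.

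Rearranging (a) and using the noise model $\|\cT\varphi^{\dagger}-f^{\delta}\|\leq\delta$ first yields $\|\cT\varphi_{\alpha}-f^{\delta}\|^{2}-\delta^{2}\leq 2\alpha[J(\varphi^{\dagger})-J(\varphi_{\alpha})]$. Inserting $\underline{\tau}\delta\leq\|\cT\varphi_{\alpha}-f^{\delta}\|$ on the left and the consequence $J(\varphi^{\dagger})-J(\varphi_{\alpha})\leq\Psi(A)$ of (c) on the right gives the preliminary bound $(\underline{\tau}^{2}-1)\delta^{2}\leq 2\alpha\Psi(A)$.

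To make the factor $(\underline{\tau}^{2}+1)/(\underline{\tau}^{2}-1)$ appear I would exploit that $x\mapsto(x+1)/(x-1)$ is decreasing for $x>1$: with $x=\|\cT\varphi_{\alpha}-f^{\delta}\|^{2}/\delta^{2}\geq\underline{\tau}^{2}$ this yields $\|\cT\varphi_{\alpha}-f^{\delta}\|^{2}+\delta^{2}\leq\frac{\underline{\tau}^{2}+1}{\underline{\tau}^{2}-1}(\|\cT\varphi_{\alpha}-f^{\delta}\|^{2}-\delta^{2})$. Combining with $A^{2}\leq 2(\|\cT\varphi_{\alpha}-f^{\delta}\|^{2}+\delta^{2})$, which is $(a+b)^{2}\leq 2(a^{2}+b^{2})$ applied to $a=\cT\varphi_{\alpha}-f^{\delta}$ and $b=f^{\delta}-\cT\varphi^{\dagger}$, and invoking the preliminary bound produces $A^{2}\leq\frac{4(\underline{\tau}^{2}+1)}{\underline{\tau}^{2}-1}\alpha\Psi(A)$. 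Substituting on the left the triangle lower bound $A\geq(\underline{\tau}-1)\delta$, again a consequence of (b) together with the noise model, gives $(\underline{\tau}-1)^{2}(\underline{\tau}^{2}-1)\delta^{2}\leq 4(\underline{\tau}^{2}+1)\alpha\Psi(A)$, in which every factor of the claimed constant is already visible.

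The last step is the passage from $\Psi(A)$ to $\Psi(\delta)$. Here I would invoke the sub-linearity of the index function (Proposition~1 of \cite{HofmannMathe12}), used in exactly the way the proof of Corollary~\ref{HofmannMathe12_Corollary1} uses it: for $A\geq\delta$, $\Psi(A)\leq(A/\delta)\Psi(\delta)$. Fed back into $A^{2}\leq\frac{4(\underline{\tau}^{2}+1)}{\underline{\tau}^{2}-1}\alpha\Psi(A)$ this converts the quadratic into the linear bound $A\leq\frac{4(\underline{\tau}^{2}+1)}{(\underline{\tau}^{2}-1)\delta}\alpha\Psi(\delta)$, and reinserting this into $(\underline{\tau}-1)^{2}(\underline{\tau}^{2}-1)\delta^{2}\leq 4(\underline{\tau}^{2}+1)\alpha\Psi(A)$ after one more application of $\Psi(A)\leq(A/\delta)\Psi(\delta)$ closes the loop and produces the desired lower bound $\alpha\geq\frac{1}{2}(\underline{\tau}-1)^{2}\frac{\underline{\tau}^{2}-1}{\underline{\tau}^{2}+1}\Phi(\delta,f^{\delta})$. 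The main obstacle I anticipate is precisely this bookkeeping: the self-consistent substitution must be arranged so that the square-root step at the end does not dilute the full $(\underline{\tau}-1)^{2}$ factor (a naive closure recovers only one power of $(\underline{\tau}-1)$), so some care in choosing which estimate is fed into which is needed to recover the claimed constant exactly.
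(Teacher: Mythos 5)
Up to the key intermediate inequality your route is sound, and it is in fact a slightly more self-contained derivation than the paper's: the paper obtains $A^{2}\leq 4\alpha\Psi(A)\frac{\underline{\tau}^{2}+1}{\underline{\tau}^{2}-1}$ (its (\ref{regpar_lower_bound}), with $A:=\norm{\cT\varphi_{\alpha(\delta,f^{\delta})}-\cT\varphi^{\dagger}}_{\cL^{2}(\cZ)}$) by inserting $\delta^{2}\leq\frac{2\alpha}{\underline{\tau}^{2}-1}\Psi(A)$ into Lemma \ref{HomannMathe12_Lemma2}, whereas you reproduce exactly the same bound from $(a+b)^{2}\leq 2(a^{2}+b^{2})$ and the monotonicity of $x\mapsto(x+1)/(x-1)$, with no recourse to that lemma; the first step $(\underline{\tau}^{2}-1)\delta^{2}\leq 2\alpha\Psi(A)$ and the triangle estimate $(\underline{\tau}-1)\delta\leq A$ coincide with the paper's.

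The genuine gap is the closing step, and you flagged it yourself without resolving it. The self-consistent substitution you describe cannot deliver the stated constant: plugging $\Psi(A)\leq(A/\delta)\Psi(\delta)$ and the resulting linear bound $A\leq\frac{4(\underline{\tau}^{2}+1)}{(\underline{\tau}^{2}-1)\delta}\alpha\Psi(\delta)$ back into $(\underline{\tau}-1)^{2}(\underline{\tau}^{2}-1)\delta^{2}\leq 4(\underline{\tau}^{2}+1)\alpha\Psi(A)$ yields an inequality quadratic in $\alpha$, and extracting $\alpha$ by a square root halves the exponent, so one lands at $\alpha\geq\frac{1}{2}(\underline{\tau}-1)\frac{\underline{\tau}^{2}-1}{\underline{\tau}^{2}+1}\Phi(\delta,f^{\delta})$ -- one power of $(\underline{\tau}-1)$ short of the claim, exactly the dilution you anticipated; each invocation of sub-linearity pays a factor of order $A/\delta$ and no reshuffling of these two estimates removes it. The way to recover $(\underline{\tau}-1)^{2}$ is to avoid sub-linearity altogether: since $\Psi$ is concave with $\Psi(0)=0$, the map $t\mapsto t^{2}/\Psi(t)$ is non-decreasing, hence
\begin{equation*}
\alpha \;\geq\; \frac{\underline{\tau}^{2}-1}{4(\underline{\tau}^{2}+1)}\,\frac{A^{2}}{\Psi(A)}
\;\geq\; \frac{\underline{\tau}^{2}-1}{4(\underline{\tau}^{2}+1)}\,\frac{(\underline{\tau}-1)^{2}\delta^{2}}{\Psi\bigl((\underline{\tau}-1)\delta\bigr)} ,
\end{equation*}
which equals the claimed bound whenever $\Psi((\underline{\tau}-1)\delta)\leq\Psi(\delta)$, i.e. for $\underline{\tau}\leq 2$ (for larger $\underline{\tau}$ one again loses a factor). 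For the record, the paper's own proof is silent on precisely this point: it substitutes $(\underline{\tau}-1)\delta\leq A$ into (\ref{regpar_lower_bound}) and then replaces $\Psi(A)$ by $\Psi(\delta)$ without justification, although $A$ may be as large as $(\overline{\tau}+1)\delta>\delta$; so the obstacle you ran into is real, and your proposal as written does not establish the corollary's constant, only the weaker bound with a single factor $(\underline{\tau}-1)$.
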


\begin{proof}

Since $\norm{\cT\varphi_{\alpha(\delta , f^{\delta})} - f^{\delta}}_{\cL^2({\cZ})} \geq \underline{\tau}\delta$
and the regularized solution $\varphi_{\alpha(\delta , f^{\delta})}$ satisfies
the assertion in Assumption \ref{assump_variational_ineq}, we immediately obtain,

\bea
\frac{\underline{\tau}^2 \delta^2}{2} \leq \frac{\norm{\cT\varphi_{\alpha(\delta , f^{\delta})} - f^{\delta}}_{\cL^2({\cZ})}^2}{2}
& \leq & \frac{\delta^2}{2} + \alpha \left( J(\varphi^{\dagger}) - J(\varphi_{\alpha(\delta)})  \right)
\nonumber\\
& \leq & \frac{\delta^2}{2} + \alpha \Psi\left(\norm{\cT\varphi_{\alpha(\delta , f^{\delta})} - \cT\varphi^{\dagger}}_{\cL^2({\cZ})}\right) ,
\nonumber
\eea
and this follows up

\bea
\delta^2 \leq \frac{2\alpha}{\underline{\tau}^2 - 1}\Psi\left(\norm{\cT\varphi_{\alpha(\delta , f^{\delta})} - \cT\varphi^{\dagger}}_{\cL^2({\cZ})}\right) . 
\eea
We plug this into the bound in Lemma \ref{HomannMathe12_Lemma2}
with the abbreviation $p_{\alpha} := \norm{\cT\varphi_{\alpha(\delta , f^{\delta})} - \cT\varphi^{\dagger}}_{\cL^{2}(\cZ)}$

\bea
\label{regpar_lower_bound}
p_{\alpha}^2 \leq 4\delta^2 + 4 \alpha \Psi(p_{\alpha}) 
& \leq & \frac{8\alpha}{\underline{\tau}^2 - 1}\Psi(p_{\alpha})  + 4\alpha \Psi(p_{\alpha}) 
\nonumber\\
& = & 4\alpha \Psi(p_{\alpha}) \frac{\underline{\tau}^2 + 1}{\underline{\tau}^2 - 1} .
\eea
Note that

\bea
\nonumber
\underline{\tau}\delta \leq \norm{\cT\varphi_{\alpha(\delta , f^{\delta})} - f^{\delta}}_{\cL^{2}(\cZ)} \leq 
p_{\alpha} + \delta
\eea
which implies

\bea
(\underline{\tau} - 1)\delta \leq p_{\alpha} .
\eea
Hence, from (\ref{regpar_lower_bound}),

\bea
\frac{1}{2}(\underline{\tau} - 1)^2 \frac{\underline{\tau}^2 - 1}{\underline{\tau}^2 + 1} \Phi(\delta , f^{\delta}) \leq \alpha .
\eea

\end{proof}


\begin{theorem}
\label{thrm_Bregman_upper_bound}
Suppose that the regularized solution $\varphi_{\alpha(\delta , f^{\delta})}$
to the problem (\ref{problem1}) obeys 
Assumption \ref{assump_variational_ineq},
for some regularization parameter $\alpha(\delta , f^{\delta})$
satisfying

\bea
\underline{\tau}\delta \leq \norm{\cT\varphi_{\alpha(\delta , f^{\delta})} - f^{\delta}}_{\cL^2(\cZ)} \leq \overline{\tau}\delta ,
\nonumber
\eea
where $1 \leq \underline{\tau} \leq \overline{\tau}$ are fixed
and with the lower bound in Corollary \ref{cor_regpar_lower_bound}. 
Then, by the second assertion (\ref{bregman_upper_bound})
in Theorem \ref{Bregman_regularizer_upper_bound2}, 
the Bregman distance $D_J$ can be bounded by

\bea
D_{J}(\varphi_{\alpha(\delta , f^{\delta})} , \varphi^{\dagger}) \leq 
O\left( \Psi\left( \delta \right) \right) .
\eea

\end{theorem}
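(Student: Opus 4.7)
The plan is to combine the two ingredients that have already been set up in the preceding results: an upper bound on the Bregman distance that is inversely proportional to $\alpha$, and a lower bound on $\alpha$ that is directly proportional to the index function $\Phi(\delta,f^\delta) = \delta^{2}/(2\Psi(\delta))$. Multiplied together, the $\delta^{2}$ factors cancel and only $\Psi(\delta)$ survives, which is exactly the desired conclusion.

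Concretely, I would first invoke the second assertion of Theorem \ref{Bregman_regularizer_upper_bound2}, which gives
\begin{equation*}
D_{J}(\varphi_{\alpha(\delta,f^{\delta})},\varphi^{\dagger}) \leq \frac{\delta^{2}}{\alpha(\delta,f^{\delta})}\Bigl(\frac{3}{2}+\overline{\tau}\Bigr).
\end{equation*}
This step is legitimate because the hypothesis supplies $\|\cT\varphi_{\alpha(\delta,f^{\delta})}-f^{\delta}\|_{\cL^{2}(\cZ)}\leq\overline{\tau}\delta$, so $\alpha(\delta,f^{\delta})\in\overline{S}$ as required by that theorem. Next, I would apply Corollary \ref{cor_regpar_lower_bound} (whose applicability follows from the other inequality $\underline{\tau}\delta\leq\|\cT\varphi_{\alpha(\delta,f^{\delta})}-f^{\delta}\|_{\cL^{2}(\cZ)}$ together with the variational inequality) to obtain
\begin{equation*}
\alpha(\delta,f^{\delta}) \geq \frac{1}{2}(\underline{\tau}-1)^{2}\,\frac{\underline{\tau}^{2}-1}{\underline{\tau}^{2}+1}\,\Phi(\delta,f^{\delta}) = \frac{(\underline{\tau}-1)^{2}(\underline{\tau}^{2}-1)}{4(\underline{\tau}^{2}+1)}\,\frac{\delta^{2}}{\Psi(\delta)}.
\end{equation*}

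Substituting this lower bound for $\alpha(\delta,f^{\delta})$ into the upper bound for $D_{J}$ causes the factor $\delta^{2}$ in the numerator to cancel against the $\delta^{2}$ appearing in the lower bound, and one is left with a constant multiple of $\Psi(\delta)$, namely
\begin{equation*}
D_{J}(\varphi_{\alpha(\delta,f^{\delta})},\varphi^{\dagger}) \leq \frac{4(\underline{\tau}^{2}+1)}{(\underline{\tau}-1)^{2}(\underline{\tau}^{2}-1)}\Bigl(\frac{3}{2}+\overline{\tau}\Bigr)\Psi(\delta) = O(\Psi(\delta)).
\end{equation*}

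Because all the hard work has been done in Theorem \ref{Bregman_regularizer_upper_bound2} and in Corollary \ref{cor_regpar_lower_bound}, there is no real analytic obstacle here; the proof is essentially a one-line chaining together of these two bounds. The only subtlety worth flagging is that the lower bound on $\alpha$ presupposes $\underline{\tau}>1$ (otherwise the denominator $(\underline{\tau}-1)^{2}(\underline{\tau}^{2}-1)$ degenerates); this is consistent with the standing Morozov setup $1\leq\underline{\tau}\leq\overline{\tau}$, but one should probably remark that the resulting constant blows up as $\underline{\tau}\downarrow 1$, which is the well-known trade-off in \emph{a posteriori} parameter choice. Apart from this bookkeeping comment, the statement follows immediately.
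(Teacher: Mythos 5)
Your proof is correct and is essentially identical to the paper's own argument: both substitute the lower bound on $\alpha(\delta,f^{\delta})$ from Corollary \ref{cor_regpar_lower_bound} into the upper bound (\ref{bregman_upper_bound}) of Theorem \ref{Bregman_regularizer_upper_bound2}, cancel the $\delta^{2}$ factors via $\Phi(\delta,f^{\delta})=\delta^{2}/(2\Psi(\delta))$, and arrive at the same constant $\frac{4(\underline{\tau}^{2}+1)}{(\underline{\tau}-1)^{3}(\underline{\tau}+1)}\bigl(\frac{3}{2}+\overline{\tau}\bigr)$. Your added remark that the constant degenerates as $\underline{\tau}\downarrow 1$ is a fair caveat that the paper leaves implicit.
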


\begin{proof}
Corrollary \ref{cor_regpar_lower_bound} and
the index function defined by (\ref{index_function})
provide the result

\bea
D_{J}(\varphi_{\alpha(\delta , f^{\delta})} , \varphi^{\dagger}) & \leq &
\frac{\delta^2}{\frac{1}{2}(\underline{\tau} - 1)^2 \frac{\underline{\tau}^2 - 1}{\underline{\tau}^2 + 1} \Phi(\delta , f^{\delta})}
\left( \frac{3}{2} + \overline{\tau} \right)
\nonumber\\
& = & \frac{4 \Psi( \delta ) (\underline{\tau}^2 + 1)}
{(\underline{\tau} - 1)^3 (\underline{\tau} + 1)} \left( \frac{3}{2} + \overline{\tau} \right) .
\nonumber
\eea

\end{proof}

%
%
%
%


\begin{theorem}
\label{Bregman_regularizer_upper_bound}
Let $\cT : \cL^{2}(\Omega) \rightarrow \cL^{2}(\cZ)$ 
be the compact and linear operator.
Over the compact and convex domain $\Omega,$
let $\varphi_{\alpha(\delta , f^{\delta})} \in \cL^{2}(\Omega)$ 
satisfy the assumption of Lemma \ref{grad_penalty_upper_bound} and 
Assumption \ref{assump_variational_ineq}.
If the regularization parameter $\alpha(\delta , f^{\delta}) \in \overline{S}$ 
is chosen as $\alpha(\delta , f^{\delta}) := \Phi(\delta , f^{\delta})$ 
where $\Phi$ is defined by (\ref{index_function})
with some given noisy measurement 
$f^{\delta} \in \cB_{\delta}(f^{\dagger}),$ 
then one can find the following upper bound
for the symmetric Bregman distance,

\bea
D_{J}(\varphi_{\alpha(\delta , f^{\delta})} , \varphi^{\dagger}) \leq 
D_{J}^{\mathrm{sym}}(\varphi_{\alpha(\delta , f^{\delta})} , \varphi^{\dagger}) \leq 
\frac{1}{\epsilon} \left(\frac{1}{\tilde{\gamma}}  + 1 \right) \Psi(\delta) + 
\epsilon \Psi^{2}(\delta)\norm{\cT^{\ast}}^2 (\overline{\tau}^2 + 1) ,
\nonumber
\eea
where the coefficients are arbitrarily chosen as $\epsilon \in \R_{+},$
$\tilde{\gamma} \in (0,1],$ and $\overline{\tau} \geq 1.$
Furthermore, if the smooth penalty term 
$J : \cL^{2}(\Omega) \rightarrow \R$ is $2-$convex, then
this upper bound implies,

\beq
\norm{\varphi_{\alpha(\delta , f^{\delta})} - \varphi^{\dagger}}_{\cL^{2}(\Omega)}^2 \leq O(\Psi(\delta)) .
\eeq
\end{theorem}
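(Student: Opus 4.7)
The plan is to establish the statement in three pieces: the trivial left-hand inequality, the central upper bound on $D_J^{\mathrm{sym}}$, and the norm-convergence corollary. The first inequality $D_J(\varphi_{\alpha(\delta, f^{\delta})}, \varphi^{\dagger}) \leq D_J^{\mathrm{sym}}(\varphi_{\alpha(\delta, f^{\delta})}, \varphi^{\dagger})$ is immediate from the definition (\ref{symmetrical_bregman}): since $J$ is convex, $D_J(\varphi^{\dagger}, \varphi_{\alpha(\delta, f^{\delta})}) \geq 0$, and adding this non-negative term to $D_J(\varphi_{\alpha(\delta, f^{\delta})}, \varphi^{\dagger})$ can only enlarge it.

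For the central bound, the natural starting point is the identity $D_J^{\mathrm{sym}}(\varphi_{\alpha(\delta, f^{\delta})}, \varphi^{\dagger}) = \langle \nabla J(\varphi_{\alpha(\delta, f^{\delta})}) - \nabla J(\varphi^{\dagger}), \varphi_{\alpha(\delta, f^{\delta})} - \varphi^{\dagger} \rangle$. I would apply Cauchy-Schwarz and then Young's inequality with parameter $\epsilon > 0$, obtaining
\begin{equation*}
D_J^{\mathrm{sym}}(\varphi_{\alpha(\delta, f^{\delta})}, \varphi^{\dagger}) \leq \frac{1}{2\epsilon} \norm{\varphi_{\alpha(\delta, f^{\delta})} - \varphi^{\dagger}}_{\cL^2(\Omega)}^2 + \frac{\epsilon}{2} \norm{\nabla J(\varphi_{\alpha(\delta, f^{\delta})}) - \nabla J(\varphi^{\dagger})}_{\cL^2(\Omega)}^2 .
\end{equation*}
The first summand is handled by feeding Assumption \ref{assump_variational_ineq} into it: $\tilde{\gamma} \norm{\varphi_{\alpha(\delta, f^{\delta})} - \varphi^{\dagger}}^2 \leq [J(\varphi_{\alpha(\delta, f^{\delta})}) - J(\varphi^{\dagger})] + \Psi(\norm{\cT\varphi_{\alpha(\delta, f^{\delta})} - \cT\varphi^{\dagger}})$. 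The penalty gap is bounded by $\Psi(\delta)$ via Theorem \ref{Bregman_regularizer_upper_bound2} under the choice $\alpha(\delta, f^{\delta}) = \Phi(\delta, f^{\delta}) = \delta^2/(2\Psi(\delta))$, the image gap is bounded by $\sqrt{6 + 2\overline{\tau}}\,\delta$ via Corollary \ref{HofmannMathe12_Corollary1}, and concavity of $\Psi$ collapses the latter into a multiple of $\Psi(\delta)$, producing (after absorbing constants) the advertised $(1/\tilde{\gamma} + 1)\Psi(\delta)$ factor.

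The second summand is addressed via the first-order optimality (\ref{optimality_1}), $\alpha \nabla J(\varphi_{\alpha(\delta, f^{\delta})}) = \cT^{\ast}(f^{\delta} - \cT\varphi_{\alpha(\delta, f^{\delta})})$, together with an analogous representation of $\nabla J(\varphi^{\dagger})$ extracted from Lemma \ref{grad_penalty_upper_bound} applied at $\varphi = \varphi^{\dagger}$. A parallelogram-type estimate $\|a - b\|^2 \leq 2\|a\|^2 + 2\|b\|^2$ combined with the MDP residual bounds $\norm{\cT\varphi_{\alpha(\delta, f^{\delta})} - f^{\delta}}^2 \leq \overline{\tau}^2 \delta^2$ and $\norm{\cT\varphi^{\dagger} - f^{\delta}}^2 \leq \delta^2$ produces the factor $\norm{\cT^{\ast}}^2(\overline{\tau}^2 + 1)\delta^2/\alpha^2$; substituting $\alpha = \Phi(\delta, f^{\delta})$ converts this into $\Psi^2(\delta)\norm{\cT^{\ast}}^2(\overline{\tau}^2 + 1)$. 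Finally, for the norm-convergence claim, the assumed $2$-convexity of $J$ together with Proposition \ref{proposition_q-convexity} gives $c \norm{\varphi_{\alpha(\delta, f^{\delta})} - \varphi^{\dagger}}_{\cL^2(\Omega)}^2 \leq D_J \leq D_J^{\mathrm{sym}}$; fixing any $\epsilon > 0$ and using that $\Psi^2(\delta) = o(\Psi(\delta))$ as $\delta \to 0$ then yields $\norm{\varphi_{\alpha(\delta, f^{\delta})} - \varphi^{\dagger}}_{\cL^2(\Omega)}^2 \leq O(\Psi(\delta))$.

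The main obstacle I anticipate is the second summand's treatment of $\nabla J(\varphi^{\dagger})$: absent an explicit source condition of the form $\nabla J(\varphi^{\dagger}) \in \mathrm{Range}(\cT^{\ast})$, its $\cL^2$-norm cannot be controlled directly from the stated hypotheses, and the clean operator-norm factor $\norm{\cT^{\ast}}^2(\overline{\tau}^2 + 1)$ in the claim strongly suggests that Lemma \ref{grad_penalty_upper_bound} — which bounds only inner products involving $\nabla J(\varphi^{\dagger})$, not its norm — must be invoked in a way that implicitly smuggles in such a representation. Matching the exact constants while doing so is the most delicate bookkeeping step in the argument.
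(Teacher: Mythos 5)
Your outline contains a genuine gap at precisely the point you flag at the end, and the gap is not a matter of delicate bookkeeping but of the decomposition itself. By applying Cauchy--Schwarz to the whole pairing $\langle \nabla J(\varphi_{\alpha(\delta,f^{\delta})}) - \nabla J(\varphi^{\dagger}),\, \varphi_{\alpha(\delta,f^{\delta})} - \varphi^{\dagger}\rangle$ you commit yourself to controlling $\norm{\nabla J(\varphi_{\alpha(\delta,f^{\delta})}) - \nabla J(\varphi^{\dagger})}_{\cL^{2}(\Omega)}$, and in particular the norm of $\nabla J(\varphi^{\dagger})$. Nothing in the hypotheses delivers this: Lemma \ref{grad_penalty_upper_bound} is a one-sided inequality between \emph{inner products}, namely $\alpha\langle \nabla J(\varphi),\varphi_{\alpha}-\varphi\rangle \leq \langle \cT^{\ast}(\cT\varphi - f^{\delta}),\varphi-\varphi_{\alpha}\rangle$, and it cannot be upgraded to a representation $\nabla J(\varphi^{\dagger}) = \cT^{\ast}\omega$ or to a bound on $\norm{\nabla J(\varphi^{\dagger})}$ without assuming a source condition that the theorem does not contain. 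So the second summand in your Young-inequality split cannot be estimated as proposed, and the route as written does not close.

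The paper's proof avoids this by splitting \emph{before} taking any norms: it writes $D_{J}^{\mathrm{sym}} \leq \vert\langle \nabla J(\varphi_{\alpha(\delta,f^{\delta})}), \varphi_{\alpha(\delta,f^{\delta})} - \varphi^{\dagger}\rangle\vert + \vert\langle \nabla J(\varphi^{\dagger}), \varphi_{\alpha(\delta,f^{\delta})} - \varphi^{\dagger}\rangle\vert$ and treats each term as an inner product. For the first, the optimality condition (\ref{optimality_1}) replaces $\nabla J(\varphi_{\alpha(\delta,f^{\delta})})$ by $\frac{1}{\alpha}\cT^{\ast}(f^{\delta}-\cT\varphi_{\alpha(\delta,f^{\delta})})$, whose norm is controlled by the MDP bound $\overline{\tau}\delta$; for the second, Lemma \ref{grad_penalty_upper_bound} at $\varphi=\varphi^{\dagger}$ converts the pairing into $\frac{1}{\alpha}\langle \cT^{\ast}(f^{\dagger}-f^{\delta}), \varphi^{\dagger}-\varphi_{\alpha(\delta,f^{\delta})}\rangle$, controlled by the noise level $\delta$. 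Only after these substitutions does the paper apply Cauchy--Schwarz and Young's inequality (with $\epsilon$), and then uses Assumption \ref{assump_variational_ineq} together with the first assertion of Theorem \ref{Bregman_regularizer_upper_bound2} to absorb the resulting $\norm{\varphi_{\alpha(\delta,f^{\delta})}-\varphi^{\dagger}}^{2}$ terms, and finally substitutes $\alpha = \Phi(\delta,f^{\delta}) = \delta^{2}/(2\Psi(\delta))$. Your treatment of the first summand (variational inequality plus Theorem \ref{Bregman_regularizer_upper_bound2} and Corollary \ref{HofmannMathe12_Corollary1}) and of the final $2$-convexity step is consistent with the paper, but the central estimate needs to be reorganized along the paper's lines for the argument to go through.
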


\begin{proof} 
By the definition of $D_{J}^{\mathrm{sym}}$ in (\ref{symmetrical_bregman}),
it suffices to prove the last inequality. First, observe that,

\bea
D_{J}^{\mathrm{sym}}(\varphi_{\alpha(\delta , f^{\delta})} , \varphi^{\dagger})
& = & \langle \nabla J(\varphi_{\alpha(\delta , f^{\delta})}) - \nabla J(\varphi^{\dagger}) , 
\varphi_{\alpha(\delta , f^{\delta})} - \varphi^{\dagger} \rangle .
\nonumber\\
& \leq & \vert \langle \nabla J(\varphi_{\alpha(\delta , f^{\delta})}) , 
\varphi_{\alpha(\delta , f^{\delta})} - \varphi^{\dagger} \rangle \vert
+ \vert \langle \nabla J(\varphi^{\dagger}) , \varphi_{\alpha(\delta , f^{\delta})} - \varphi^{\dagger} \rangle \vert .
\nonumber
\eea
We will bound each inner product separately.
The regularized solution $\varphi_{\alpha(\delta , f^{\delta})},$ 
for the regularization parameter $\alpha(\delta , f^{\delta}) := \Phi(\delta , f^{\delta})$ 
where $\Phi$ is defined by (\ref{index_function}), satisfies the first order
optimality condition given in (\ref{optimality_1}) as well as the variational
inequality in Assumption \ref{assump_variational_ineq}. So,

\bea
\left\vert \langle \nabla J(\varphi_{\alpha(\delta , f^{\delta})}) , 
\varphi_{\alpha(\delta , f^{\delta})} - \varphi^{\dagger} \rangle \right\vert
& = & 
\left\vert \frac{1}{\alpha(\delta , f^{\delta})} \langle \cT^{\ast}(f^{\delta} - \cT\varphi_{\alpha(\delta , f^{\delta})}) , 
\varphi_{\alpha(\delta)} - \varphi^{\dagger} \rangle \right\vert
\nonumber\\
& \leq & \frac{1}{\alpha(\delta , f^{\delta})} \norm{\cT^{\ast}} 
\norm{\cT\varphi_{\alpha(\delta , f^{\delta})} - f^{\delta}}_{\cL^{2}(\cZ)}
\norm{\varphi_{\alpha(\delta , f^{\delta})} - \varphi^{\dagger}}_{\cL^{2}(\Omega)}
\nonumber\\
& \leq ^{\footnotemark}& 
\frac{\epsilon}{2\alpha^2(\delta , f^{\delta})} \norm{\cT^{\ast}}^2\delta^2\overline{\tau}^2 +
\frac{1}{2\epsilon} \norm{\varphi_{\alpha(\delta , f^{\delta})} - \varphi^{\dagger}}_{\cL^2(\Omega)}^2
\nonumber\\
& \leq & \frac{\epsilon}{2\alpha^2(\delta , f^{\delta})} \norm{\cT^{\ast}}^2\delta^2\overline{\tau}^2 
+ \frac{1}{2\epsilon}\left(\frac{1}{\tilde{\gamma}}\frac{\delta^2}{2\alpha(\delta , f^{\delta})} + \Psi(\delta) \right)
\nonumber
\eea
\footnotetext[1]{By Young's inequality and since $\alpha(\delta , f^{\delta}) \in \overline{S}.$}

\noindent The assertion in Lemma \ref{grad_penalty_upper_bound},
with the regularization parameter $\alpha(\delta , f^{\delta}) > 0,$
brings the following bound

\bea
\vert \langle \nabla J(\varphi^{\dagger}) , \varphi_{\alpha(\delta , f^{\delta})} - \varphi^{\dagger} \rangle \vert
& \leq & \left\vert \frac{1}{\alpha(\delta , f^{\delta})} \langle \cT^{\ast}(f^{\dagger} - f^{\delta}) , 
\varphi^{\dagger} - \varphi_{\alpha(\delta , f)} \rangle \right\vert
\nonumber\\
& \leq & \frac{\delta}{\alpha(\delta , f^{\delta})} \norm{\cT^{\ast}} \norm{\varphi_{\alpha(\delta , f^{\delta})} - \varphi^{\dagger}}_{\cL^{2}(\Omega)} .
\nonumber\\
& \leq ^{\footnotemark} & \frac{\epsilon}{2\alpha^2(\delta , f^{\delta})} \norm{\cT^{\ast}}^2\delta^2 + 
\frac{1}{2\epsilon}\norm{\varphi_{\alpha(\delta , f^{\delta})} - \varphi^{\dagger}}_{\cL^{2}(\Omega)}^2
\nonumber\\
& \leq & \frac{\epsilon}{2\alpha^2(\delta , f^{\delta})} \norm{\cT^{\ast}}^2\delta^2 + 
\frac{1}{2\epsilon}\left(\frac{1}{\tilde{\gamma}}\frac{\delta^2}{2\alpha(\delta , f^{\delta})} + \Psi(\delta) \right) .
\nonumber
\eea
\footnotetext[2]{By Young's inequality and since $\alpha(\delta , f^{\delta}) \in \overline{S}.$}

\noindent Since the regularization parameter is chosen as
$\alpha(\delta , f^{\delta}) := \Phi(\delta , f^{\delta}),$
see (\ref{index_function}), then

\bea
D_{J}^{\mathrm{sym}} (\varphi_{\alpha(\delta , f^{\delta})} , \varphi^{\dagger}) & \leq & 
\frac{1}{2\epsilon}\left(\frac{1}{\tilde{\gamma}}\frac{\delta^2}{2\alpha(\delta , f^{\delta})} + \Psi(\delta) \right) 
+ \frac{\epsilon}{2\alpha^2(\delta , f^{\delta})} \norm{\cT^{\ast}}^2\delta^2(\overline{\tau}^2 + 1) .
\nonumber
\eea
With the additional assumption on $J$ which is $2-$convexity, 
then the norm convergence of 
$\norm{\varphi_{\alpha(\delta , f^{\delta})} - \varphi^{\dagger}}_{\cL^{2}(\Omega)}$
is obtained due to (\ref{q_convexity}).
\end{proof}



\section{Convex Regularization for the Smoothed-TV}
\label{minimize_convex_integrand}

In this section, we give the specific interpretation
of the general convex regularization for the $2-$convex,
see (\ref{q_convexity}) in Definition \ref{total_convexity}, 
smoothed-TV functional. To this end, we state
the following minimization problem

\bea
\label{smoothed-tv_min0}
\varphi_{\alpha(\delta)} \in \argmin_{\varphi \in \cW^{1,2}(\Omega)} 
\left\{\frac{1}{2} \norm{\cT\varphi - f^{\delta}}_{\cL^2}^2 + 
\alpha J_{\beta}^{\mathrm{TV}}(\varphi) \right\} ,
\eea
where the smoothed-TV penalty, 
\textbf{\cite{ChanGolubMulet99, DobsonScherzer96}}, 
is defined by 

\begin{displaymath}
J_{\beta}^{\mathrm{TV}}(\varphi) := \int_{\Omega} \sqrt{\norm{\nabla\varphi(x)}_2^2 + \beta} dx .
\end{displaymath}
Existence of the solution for the problem (\ref{smoothed-tv_min0}) has been studied extensively in
\textbf{\cite{AcarVogel94, Hintermuller14, Setzer11}}. Moreover, an existence and uniquness 
theorem for the minimizer of quadratic functionals 
with different type of convex integrands has been established 
in \textbf{\cite[Theorem 9.5-2]{Ciarlet13}}. 
As has been given by the {\em Minimal Hypersurfaces} problem in \textbf{\cite{Ekeland74}}, the minimizer
of the problem (\ref{smoothed-tv_min0})
exists on the Hilbert space $\cW^{1,2}(\Omega).$ 

Unlike in the available literature \textbf{\cite{AcarVogel94, BachmayrBurger09, BardsleyLuttman09, 
ChambolleLions97, ChanChen06, ChanGolubMulet99, DobsonScherzer96, 
DobsonVogel97, VogelOman96}}, we will arrive at a new lower bound
for the Bregman distance particularly associated with
the smoothed-TV functional $J_{\beta}^{\mathrm{TV}}.$
We will achieve this by means of the strong convexity 
of the regarding functional.

\begin{theorem}\textbf{[Convexity of the smoothed-TV penalty]}\textbf{\cite[Theorem 2.4]{AcarVogel94}}
For any $\beta > 0,$ the functional 
$J_{\beta}^{\mathrm{TV}} : \cW^{1,1}(\Omega) \rightarrow \R_{+},$ that is defined by 
$J_{\beta}^{\mathrm{TV}}(\varphi) := \sqrt{\Vert \nabla\varphi(x) \Vert_2^2 + \beta} dx,$ 
is convex.
\end{theorem}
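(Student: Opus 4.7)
The plan is to reduce the convexity of $J_\beta^{\mathrm{TV}}$ to the pointwise convexity of its integrand. Define $g : \R^n \to \R_+$ by $g(p) := \sqrt{\Vert p \Vert_2^2 + \beta}$. Then, using the linearity of the (weak) gradient and monotonicity/linearity of the Lebesgue integral, convexity of $g$ at every point transfers to convexity of the functional $J_\beta^{\mathrm{TV}}$. So the whole argument rests on showing $g$ is convex on $\R^n$.

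For the first step I would establish convexity of $g$. The cleanest route is to observe the identity $g(p) = \Vert (p_1, \ldots, p_n, \sqrt{\beta}) \Vert_2$, i.e., $g$ is the Euclidean norm on $\R^{n+1}$ composed with the affine embedding $p \mapsto (p, \sqrt{\beta})$. Since every norm is convex (triangle inequality plus positive homogeneity) and convexity is preserved under affine precomposition, $g$ is convex. As an alternative (and as a bonus for later use in the paper, where lower bounds on the Hessian are needed), one can compute
\begin{displaymath}
\nabla^2 g(p) \; = \; \frac{(\Vert p \Vert_2^2 + \beta)\, I - p\, p^\top}{(\Vert p \Vert_2^2 + \beta)^{3/2}},
\end{displaymath}
and, for any $v \in \R^n$, apply Cauchy--Schwarz $(p \cdot v)^2 \leq \Vert p \Vert_2^2 \Vert v \Vert_2^2$ to obtain $v^\top \nabla^2 g(p)\, v \geq \beta \Vert v \Vert_2^2 / (\Vert p \Vert_2^2 + \beta)^{3/2} \geq 0$, confirming positive semidefiniteness.

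For the second step, fix $\varphi_0, \varphi_1 \in \cW^{1,1}(\Omega)$ and $\lambda \in [0,1]$. Linearity of the weak gradient gives $\nabla(\lambda \varphi_1 + (1-\lambda) \varphi_0)(x) = \lambda \nabla \varphi_1(x) + (1-\lambda) \nabla \varphi_0(x)$ for a.e.\ $x \in \Omega$, so the pointwise convexity of $g$ established above yields
\begin{displaymath}
g\bigl(\nabla(\lambda \varphi_1 + (1-\lambda)\varphi_0)(x)\bigr) \; \leq \; \lambda\, g(\nabla\varphi_1(x)) + (1-\lambda)\, g(\nabla\varphi_0(x)).
\end{displaymath}
Integrating this a.e.\ pointwise inequality over $\Omega$ produces exactly the convexity statement $J_\beta^{\mathrm{TV}}(\lambda\varphi_1 + (1-\lambda)\varphi_0) \leq \lambda J_\beta^{\mathrm{TV}}(\varphi_1) + (1-\lambda) J_\beta^{\mathrm{TV}}(\varphi_0)$.

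There is no real obstacle here; the only technical subtlety is that one should note the integrability of each piece, which follows from $\varphi_i \in \cW^{1,1}(\Omega)$ together with the crude bound $g(p) \leq \Vert p \Vert_2 + \sqrt{\beta}$, ensuring each side of the inequality is a well-defined element of $\R_+$. In summary, the entire proof is one short paragraph once the convexity of $g$ is in hand, and the norm-identity trick makes that convexity essentially immediate.
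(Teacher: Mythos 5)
Your proof is correct. The paper does not prove this statement itself --- it imports it by citation from Acar and Vogel \cite{AcarVogel94} --- and your argument (pointwise convexity of the integrand $g(p)=\sqrt{\Vert p\Vert_2^2+\beta}$, via the affine-composition-with-a-norm identity or the positive semidefinite Hessian, followed by linearity of the weak gradient and monotonicity of the integral) is exactly the standard route taken there. Your Hessian computation with the Cauchy--Schwarz bound $v^{\top}\nabla^2 g(p)\,v \geq \beta\Vert v\Vert_2^2/(\Vert p\Vert_2^2+\beta)^{3/2}$ in fact anticipates the integrand-level estimate the paper later derives in Theorem \ref{smoothed_TV_strongly_convex}.
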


Before the Hessian of $J_{\beta}^{\mathrm{TV}},$
we first calculate the Fr\'{e}chet derivative
of it in the direction $\Phi \in \cC_{c}^{1}(\Omega),$ 

\bea
\label{derivative_smoothed_tv}
\frac{d}{d t} J_{\beta}^{\mathrm{TV}}(\varphi + t \Phi) |_{t = 0} 
& = & \int_{\Omega} \frac{(\nabla \varphi(x) + t \nabla\Phi(x))\nabla\Phi(x)}{(\Vert \nabla \varphi(x) + t \nabla\Phi(x)\Vert_2^{2}+\beta)^{1/2}} dx |_{t = 0}
\nonumber\\
& = & \int_{\Omega} \frac{\nabla\varphi(x) \nabla\Phi(x)}{(\Vert \nabla \varphi(x) \Vert_2^2 + \beta)^{1/2}} dx.
\nonumber\\
& = & \int_{\Omega} \nabla^{\ast} \left( \frac{\nabla\varphi(x) }{(\Vert \nabla \varphi(x) \Vert_2^2 + \beta)^{1/2}} \right) \Phi(x) dx ,
\eea 
where $\nabla^{\ast}$ represents the adjoint of 
the gradient operator which is $\nabla^{\ast} = - \mathrm{div} .$

%

\begin{theorem}\textbf{[Smoothed-TV functional is strongly convex]}
\label{smoothed_TV_strongly_convex}
For any $\varphi \in \cW^{1,2}(\Omega)$ defined
over the compactly supported domain $\Omega \subset \R^{3}$
and for the smoothed-TV functional 
$J_{\beta}^{\mathrm{TV}} : \cW^{1,2}(\Omega) \rightarrow \R_{+},$ 

\begin{displaymath}
J_{\beta}^{\mathrm{TV}}(\varphi) := \int_{\Omega} \sqrt{\norm{\nabla \varphi(x)}_2^2 + \beta} dx ,
\end{displaymath}
where $\beta > 0$ is fixed, the Hessian of
of $J_{\beta}^{\mathrm{TV}},$ which is
$(J_{\beta}^{\mathrm{TV}})^{\prime\prime}[\varphi](\Phi,\Phi),$
can be bounded below by some functional 
$l : \cW^{1,2}(\Omega)  \rightarrow \R_{+}$
satisfying

\begin{displaymath}
(J_{\beta}^{\mathrm{TV}})^{\prime\prime}[\varphi](\Phi,\Phi) \geq l(\varphi) \norm{\nabla\Phi}_{\cL^2(\Omega)}^2 .
\end{displaymath}

\end{theorem}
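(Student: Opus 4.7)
The plan is to compute the second Fr\'{e}chet derivative of $J_\beta^{\mathrm{TV}}$ at $\varphi$ in the direction $\Phi$, then to bound the resulting quadratic integrand from below pointwise by a positive weight times $\norm{\nabla\Phi}_2^2$, and finally to convert this weighted integral into a scalar multiple of $\norm{\nabla\Phi}_{\cL^2(\Omega)}^2$, which identifies the functional $l(\varphi)$.

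First I would differentiate the expression in (\ref{derivative_smoothed_tv}) once more in $t$. Setting $F(t) := J_\beta^{\mathrm{TV}}(\varphi+t\Phi)$ and applying the quotient rule to the integrand at $t=0$ gives
\[
(J_\beta^{\mathrm{TV}})''[\varphi](\Phi,\Phi) \;=\; \int_\Omega \frac{\norm{\nabla\Phi(x)}_2^2\,(\norm{\nabla\varphi(x)}_2^2+\beta)\;-\;(\nabla\varphi(x)\cdot\nabla\Phi(x))^2}{(\norm{\nabla\varphi(x)}_2^2+\beta)^{3/2}}\,dx .
\]
Pointwise Cauchy--Schwarz, $(\nabla\varphi\cdot\nabla\Phi)^2\leq\norm{\nabla\varphi}_2^2\norm{\nabla\Phi}_2^2$, shows that the numerator is bounded below by $\beta\norm{\nabla\Phi}_2^2$, which re-confirms convexity and yields
\[
(J_\beta^{\mathrm{TV}})''[\varphi](\Phi,\Phi) \;\geq\; \beta \int_\Omega \frac{\norm{\nabla\Phi(x)}_2^2}{(\norm{\nabla\varphi(x)}_2^2+\beta)^{3/2}}\,dx.
\]

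For the last step, the most direct choice is the essential infimum of the weight,
\[
l(\varphi) \;:=\; \frac{\beta}{(\norm{\nabla\varphi}_{\cL^\infty(\Omega)}^2+\beta)^{3/2}},
\]
which immediately gives $(J_\beta^{\mathrm{TV}})''[\varphi](\Phi,\Phi)\geq l(\varphi)\norm{\nabla\Phi}_{\cL^2(\Omega)}^2$. An $\cL^\infty$-free variant is to apply integral Cauchy--Schwarz to the pair $\bigl(\norm{\nabla\Phi}_2/g^{3/4},\;\norm{\nabla\Phi}_2\,g^{3/4}\bigr)$ with $g:=\norm{\nabla\varphi}_2^2+\beta$,
\[
\int_\Omega \frac{\norm{\nabla\Phi}_2^2}{g^{3/2}}\,dx \;\geq\; \frac{\norm{\nabla\Phi}_{\cL^2}^4}{\int_\Omega \norm{\nabla\Phi}_2^2\, g^{3/2}\,dx},
\]
and then to bound the denominator via H\"{o}lder together with the Sobolev embedding $\cW^{1,2}(\R^3)\hookrightarrow \cL^6$, so as to arrive at a scalar factor depending only on $\varphi$.

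The main obstacle is the Sobolev geometry in $\R^3$: for $\varphi\in\cW^{1,2}(\Omega)$ one only has $\nabla\varphi\in\cL^2(\Omega)$, so $\norm{\nabla\varphi}_{\cL^\infty}$ may be infinite and $(\norm{\nabla\varphi}_2^2+\beta)^{3/2}$ need not even be integrable. A scalar $l(\varphi)>0$ in the literal sense of the statement therefore requires either an additional regularity assumption (for instance $\varphi\in\cW^{1,\infty}$, or $\nabla\varphi\in\cL^3(\Omega)$ which is what the Cauchy--Schwarz detour actually needs), or a re-interpretation of $l$ as depending on a higher Sobolev norm of $\varphi$. I would expect the proof either to invoke such extra regularity of the minimiser (perhaps through the Euler--Lagrange equation implicit in (\ref{derivative_smoothed_tv})) or to absorb the integrability obstruction into a more refined definition of $l(\varphi)$; in either case, the pointwise identity for the Hessian derived in the first step is the substantive content, and the remaining argument is a matter of how one chooses to package the weight.
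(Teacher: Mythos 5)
Your proposal follows essentially the same route as the paper: the paper likewise computes the second directional derivative, applies the pointwise Cauchy--Schwarz bound $\Vert\nabla\varphi\cdot\nabla\Phi\Vert^2\leq\Vert\nabla\varphi\Vert_2^2\Vert\nabla\Phi\Vert_2^2$ to isolate the $\beta$-term, and then defines $l(\varphi)$ as the infimum over $x\in\Omega$ of the resulting weight $m_\beta(\varphi)=\beta/(\Vert\nabla\varphi\Vert_2^2+\beta)^2$, which is the same device as your $\cL^\infty$-based constant (up to the paper's exponent $2$ versus your $3/2$). Your closing caveat -- that for a general $\varphi\in\cW^{1,2}(\Omega)$ in $\R^3$ this infimum may be zero, so strict positivity of $l(\varphi)$ needs extra regularity -- is a fair observation, but it concerns the paper's own formulation rather than a defect of your argument.
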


\begin{proof}

In (\ref{derivative_smoothed_tv}), we, in the direction
$\Phi \in \cC_{c}^{1}(\Omega)$, have calculated that

\bea
(J_{\beta}^{\mathrm{TV}})^{\prime}[\varphi] (\Phi) = 
\int_{\Omega} \frac{\nabla\varphi(x) \nabla\Phi(x)}{(\Vert \nabla \varphi(x) \Vert_2^2 + \beta)^{1/2}} dx .
\nonumber
\eea
Following from here, we can calculate the Hessian in the direction 
$\Psi \in \cW^{1,2}(\Omega),$

\bea
(J_{\beta}^{\mathrm{TV}})^{\prime\prime}[\varphi] (\Phi , \Phi) & = & 
\frac{d}{d s}( J_{\beta}^{\mathrm{TV}})^{\prime}[\varphi + s\Psi] (\Phi) \Bigg\vert_{s = 0}
\nonumber\\
& = &
\frac{d}{d s} \int_{\Omega}  \frac{(\nabla \varphi(x) + s\nabla \Psi(x))\nabla\Phi(x)}
{(\Vert\nabla\varphi(x) + s\nabla\Psi(x) \Vert_2^2 + \beta)^{1/2}} dx \Bigg\vert_{s = 0} ,
\nonumber
\eea
which is


\bea
\frac{d}{d s}( J_{\beta}^{\mathrm{TV}})^{\prime}[\varphi + s\Psi] (\Phi) \Bigg\vert_{s = 0}
& = & \int_{\Omega} \frac{\nabla\Psi(x)\nabla\Phi(x)(\Vert\nabla\varphi(x)\Vert_2^2 + \beta) - 
\Vert \nabla\varphi(x) \nabla\Phi(x)\Vert^2}{(\Vert \nabla\varphi(x) \Vert_2^2 + \beta)^{3/2}} dx .
\nonumber
\eea
By the choice of $\Psi = \Phi$ and since 
$\Vert \nabla\varphi\nabla\Phi\Vert_2^2 \leq \Vert \nabla\varphi \Vert_2^2 \Vert\nabla\Phi\Vert_2^2 ,$ 

\bea
(J_{\beta}^{\mathrm{TV}})^{\prime\prime}[\varphi](\Phi,\Phi) & \geq &
\int_{\Omega} \frac{\Vert \nabla\Phi(x) \Vert_2^2(\Vert\nabla\varphi(x)\Vert_2^2 + \beta) - 
\Vert \nabla\varphi \Vert_2^2 \Vert\nabla\Phi\Vert_2^2}{(\Vert \nabla\varphi(x) \Vert_2^2 + \beta)^{3/2}} dx 
\nonumber\\
& = & \int_{\Omega} \frac{\beta \Vert \nabla\Phi(x) \Vert_2^2}{(\Vert \nabla\varphi(x) \Vert_2^2 + \beta)^{2}} dx .
\eea
Now, let us abbreviate 

\begin{displaymath}
m_{\beta}(\varphi) := 
\frac{\beta}{(\Vert \nabla\varphi \Vert_2^2 + \beta)^{2}} .
\end{displaymath}
Hence, we have

\begin{displaymath}
(J_{\beta}^{\mathrm{TV}})^{\prime\prime}[\varphi](\Phi,\Phi) \geq 
\inf_{x \in \Omega}\{ m_{\beta}(\varphi) \} \Vert \nabla\Phi \Vert_{\cL^{2}(\Omega)}^2 ,
\end{displaymath}
which is the desired result by defining, 
$l : \cW^{1,2}(\Omega) \rightarrow \R_{+},$

\begin{displaymath}
l(\varphi) := \inf_{x \in \Omega}\{ m_{\beta}(\varphi) \} .
\end{displaymath}

%
\end{proof}

Combining this result together with our early
Proposition \ref{proposition_q-convexity} yields
a new lower bound for the Bregman distance particularly 
associated with the smoothed-TV term $J_{\beta}^{\mathrm{TV}}$ 
that is formulated below.

\begin{cor}
\label{bregman_smoothed-TV_Hessian}
Under the same assumption of Theorem \ref{smoothed_TV_strongly_convex}, 
and for any $\varphi, \psi \in \cW^{1,2}(\Omega),$
the Bregman distance associated with the strongly convex 
smoothed-TV functional

\begin{displaymath}
J_{\beta}^{\mathrm{TV}}(\varphi) := \int_{\Omega} \sqrt{\norm{\nabla \varphi(x) }_2^2 + \beta} dx ,
\end{displaymath}
can be bounded below by some 
$l : \cW^{1,2}(\Omega)  \rightarrow \R_{+}$
as such

\bea
l(\varphi)\Vert \nabla\Phi \Vert_{\cL^{2}(\Omega)}^2 \norm{\varphi - \psi}_{\cL^2(\Omega)}^{2} ,
\leq D_{J_{\beta}^{\mathrm{TV}}}(\varphi, \psi) ,
\eea
where $\Phi \in \cC_{c}^{1}(\Omega) \cap \cW^{1,2}(\Omega)$ satisfies
\begin{displaymath}
(J_{\beta}^{\mathrm{TV}})^{\prime\prime}[\varphi](\Phi,\Phi) \geq l(\varphi) \norm{\nabla\Phi}_{\cL^2(\Omega)}^2 .
\end{displaymath}

\end{cor}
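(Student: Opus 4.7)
The plan is to chain Proposition \ref{proposition_q-convexity} with Theorem \ref{smoothed_TV_strongly_convex}. Proposition \ref{proposition_q-convexity} converts a positivity lower bound on the Hessian of a smooth strongly convex functional into a quadratic lower bound for its Bregman distance in the $\cL^2$ norm; Theorem \ref{smoothed_TV_strongly_convex} supplies exactly such a positivity bound for $J_{\beta}^{\mathrm{TV}}$, namely $(J_{\beta}^{\mathrm{TV}})''[\varphi](\Phi,\Phi) \geq l(\varphi)\Vert\nabla\Phi\Vert_{\cL^{2}(\Omega)}^{2}$. So the corollary should be nothing more than substituting the explicit modulus of convexity into the generic Bregman estimate.

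Concretely, I would first verify the hypotheses of Proposition \ref{proposition_q-convexity} for $\cP = J_{\beta}^{\mathrm{TV}}$: twice continuous Fr\'echet differentiability on $\cW^{1,2}(\Omega)$ is clear from the explicit integrand and the computation leading to (\ref{derivative_smoothed_tv}), while strong convexity follows from Theorem \ref{smoothed_TV_strongly_convex} together with the strict positivity $l(\varphi) > 0$ guaranteed by $\beta > 0$. Next, mimicking the Taylor expansion argument used in the proof of Proposition \ref{proposition_q-convexity}, I would write
\begin{displaymath}
D_{J_{\beta}^{\mathrm{TV}}}(\varphi,\psi) = \frac{1}{2}\, (J_{\beta}^{\mathrm{TV}})''[\psi](\varphi - \psi, \varphi - \psi) + R(\varphi - \psi),
\end{displaymath}
where the remainder $R$ is nonnegative thanks to the strong convexity just established. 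Inserting the Hessian lower bound of Theorem \ref{smoothed_TV_strongly_convex} on the right-hand side then yields the asserted estimate, with the modulus of convexity in Proposition \ref{proposition_q-convexity} read off as $c = l(\varphi)\,\Vert\nabla\Phi\Vert_{\cL^{2}(\Omega)}^{2}$ for the test direction $\Phi$ declared in the hypothesis of the corollary.

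The main obstacle I anticipate is a dimensional mismatch: Theorem \ref{smoothed_TV_strongly_convex} delivers coercivity in the $\cL^{2}$-norm of the gradient $\nabla\Phi$, whereas Proposition \ref{proposition_q-convexity} asks for coercivity in the $\cL^{2}$-norm of the increment $u - v$ itself. In the statement of the corollary the mismatch is simply absorbed by keeping the factor $\Vert\nabla\Phi\Vert_{\cL^{2}(\Omega)}^{2}$ visible on the left-hand side; if one instead wanted a genuine constant independent of $\Phi$, a Poincar\'e inequality on $\cW_{0}^{1,2}(\Omega)$ would be needed, and that is justified by the compactness and convexity of $\Omega$ together with the density of $\cC_{c}^{\infty}(\Omega)$ in $\cW_{0}^{1,2}(\Omega)$ recalled in Section \ref{notations_functional_analysis}. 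Either route produces the claimed lower bound for the Bregman distance associated with the smoothed-TV functional.
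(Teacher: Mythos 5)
Your proposal follows essentially the same route as the paper's own proof: apply the Taylor-expansion argument from the proof of Proposition \ref{proposition_q-convexity} (with the roles of $u$ and $v$ assigned to $\psi$ and $\varphi$) and insert the Hessian lower bound of Theorem \ref{smoothed_TV_strongly_convex} as the modulus of convexity, which is exactly what the paper does. Your closing remark about the mismatch between $\Vert\nabla\Phi\Vert_{\cL^{2}(\Omega)}$ and $\Vert\varphi-\psi\Vert_{\cL^{2}(\Omega)}$ (and the possible Poincar\'e remedy) goes a bit beyond the paper, which simply leaves the factor $\Vert\nabla\Phi\Vert_{\cL^{2}(\Omega)}^{2}$ visible in the stated bound, but the core argument is identical.
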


\begin{proof}

In the proof of Proposition \ref{proposition_q-convexity},
we set $u := \psi$ and $v := \varphi.$ This setting 
has no impact on the proof since $\norm{u - v} = \norm{v - u}.$
By this setting
and following the calculations in the regarding proof, 
and also by Theorem \ref{smoothed_TV_strongly_convex},
we associate the necessary lower bound with 
$(J_{\beta}^{\mathrm{TV}})^{\prime\prime}(\varphi)[\Phi, \Phi]$
for $\varphi \in \cW^{1,2}(\Omega).$
\end{proof}

\begin{cor}
\label{corollary_smoothed_TV_convergence}
Let the regularized solution $\varphi_{\alpha(\delta , f^{\delta})} \in \cW^{1,2}(\Omega)$
of the problem (\ref{smoothed-tv_min0}) satisfy 
Assumption \ref{assump_variational_ineq}.
Then under the same assumptions of Theorem \ref{smoothed_TV_strongly_convex}
and Corollary \ref{bregman_smoothed-TV_Hessian}, for {\em a posteriori}
rule for the choice of regularization parameter $\alpha(\delta , f^{\delta}),$

\begin{displaymath}
\norm{\varphi_{\alpha(\delta , f^{\delta})} - \varphi^{\dagger}}_{\cL^2(\Omega)}^2 \rightarrow O(\Psi(\delta))
\end{displaymath}
as $\delta \rightarrow 0$ due to the estimation (\ref{q_convexity}) 
of Definition \ref{total_convexity},
Theorem \ref{Bregman_regularizer_upper_bound2},
Theorem \ref{Bregman_regularizer_upper_bound}, and Theorem \ref{thrm_Bregman_upper_bound}.
\end{cor}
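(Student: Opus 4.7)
The plan is to sandwich $\|\varphi_{\alpha(\delta, f^\delta)} - \varphi^\dagger\|_{\cL^2(\Omega)}^2$ between a strong-convexity lower bound on the Bregman distance coming from Corollary \ref{bregman_smoothed-TV_Hessian} and the upper bound on that same Bregman distance produced by the variational machinery of Section \ref{section_norm_convergence}. Once the two bounds are in place, the rate $O(\Psi(\delta))$ drops out by division.

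First, I would record that Theorem \ref{smoothed_TV_strongly_convex} together with Corollary \ref{bregman_smoothed-TV_Hessian} and Proposition \ref{proposition_q-convexity} certify that $J_\beta^{\mathrm{TV}} : \cW^{1,2}(\Omega) \to \R_{+}$ is $2$-convex in the sense of (\ref{q_convexity}): there exists $c^{\ast} > 0$ such that
\[
D_{J_\beta^{\mathrm{TV}}}(\varphi_{\alpha(\delta, f^\delta)}, \varphi^\dagger) \;\geq\; c^{\ast}\,\|\varphi_{\alpha(\delta, f^\delta)} - \varphi^\dagger\|_{\cL^2(\Omega)}^{2}.
\]
This converts Bregman convergence into $\cL^2$ convergence. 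Next, under Assumption \ref{assump_variational_ineq} and the a posteriori choice $\alpha(\delta, f^\delta) \in \overline{S} \cap \underline{S}$ with $\alpha(\delta, f^\delta) := \Phi(\delta, f^\delta)$, Corollary \ref{cor_regpar_lower_bound} bounds $\alpha(\delta, f^\delta)$ from below by a multiple of $\Phi(\delta, f^\delta)$; feeding this lower bound into the Bregman estimate (\ref{bregman_upper_bound}) of Theorem \ref{Bregman_regularizer_upper_bound2} exactly yields Theorem \ref{thrm_Bregman_upper_bound}, namely
\[
D_{J_\beta^{\mathrm{TV}}}(\varphi_{\alpha(\delta, f^\delta)}, \varphi^\dagger) \;\leq\; O(\Psi(\delta)).
\]
(If one wishes to route through the symmetric Bregman distance instead, Theorem \ref{Bregman_regularizer_upper_bound} delivers the same $O(\Psi(\delta))$ upper bound with the index-function parameter choice.)

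Chaining the two displays gives
\[
\|\varphi_{\alpha(\delta, f^\delta)} - \varphi^\dagger\|_{\cL^2(\Omega)}^{2} \;\leq\; \frac{1}{c^{\ast}}\, D_{J_\beta^{\mathrm{TV}}}(\varphi_{\alpha(\delta, f^\delta)}, \varphi^\dagger) \;\leq\; O(\Psi(\delta)),
\]
which tends to zero as $\delta \to 0$ since $\Psi$ is an index function with $\Psi(0) = 0$.

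The main obstacle is that the modulus produced by Theorem \ref{smoothed_TV_strongly_convex} has the form $l(\varphi) = \inf_{x \in \Omega} \beta/(\|\nabla \varphi(x)\|_2^2 + \beta)^2$, which degenerates to zero as $\|\nabla \varphi\|_{\infty}$ blows up. To legitimize a single positive constant $c^{\ast}$ independent of $\delta$, I would exploit the a priori bound $J_\beta^{\mathrm{TV}}(\varphi_{\alpha(\delta, f^\delta)}) \leq J_\beta^{\mathrm{TV}}(\varphi^\dagger) + \delta^2/(2\alpha)$ from the first assertion of Theorem \ref{Bregman_regularizer_upper_bound2} to confine the regularized minimizers, for all sufficiently small $\delta$, to a bounded ball of $\cW^{1,2}(\Omega)$ on which $l(\varphi)$ is uniformly positive. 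This uniform control of the modulus is the technical crux; once it is in hand, the above chain of inequalities delivers the claimed $\cL^2$ convergence at rate $O(\Psi(\delta))$.
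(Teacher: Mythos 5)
Your main chain is exactly the route the paper takes: the corollary is stated there as a direct consequence of the $2$-convexity lower bound on $D_{J_{\beta}^{\mathrm{TV}}}$ coming from Theorem \ref{smoothed_TV_strongly_convex} and Corollary \ref{bregman_smoothed-TV_Hessian} (via Proposition \ref{proposition_q-convexity}), combined with the $O(\Psi(\delta))$ upper bounds on the Bregman distance from Theorems \ref{Bregman_regularizer_upper_bound2}, \ref{thrm_Bregman_upper_bound} and \ref{Bregman_regularizer_upper_bound}, with no further argument supplied. So the sandwich-and-divide structure of your proposal matches the intended proof.

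Where you go beyond the paper --- trying to replace the $\varphi$-dependent modulus by a single constant $c^{\ast}>0$ independent of $\delta$ --- is where your argument has a gap. The modulus produced in Theorem \ref{smoothed_TV_strongly_convex} is $l(\varphi)=\inf_{x\in\Omega}\,\beta/(\norm{\nabla\varphi(x)}_2^2+\beta)^{2}$, which is governed by the pointwise (essentially $\cL^{\infty}$) size of $\nabla\varphi$, not by any integral norm. Confining the minimizers to a bounded ball of $\cW^{1,2}(\Omega)$ therefore does not make $l$ uniformly positive: functions with bounded $\cL^{2}$ gradients can have arbitrarily large pointwise gradients, driving $l(\varphi_{\alpha(\delta,f^{\delta})})$ to $0$. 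Moreover, the a priori estimate $J_{\beta}^{\mathrm{TV}}(\varphi_{\alpha})\leq J_{\beta}^{\mathrm{TV}}(\varphi^{\dagger})+\delta^{2}/(2\alpha)$ from Theorem \ref{Bregman_regularizer_upper_bound2} only controls $\int_{\Omega}\sqrt{\norm{\nabla\varphi_{\alpha}(x)}_2^{2}+\beta}\,dx$, i.e. an $\cL^{1}$-type gradient bound, so it does not even place the minimizers in a fixed $\cW^{1,2}$ ball, let alone give the $\cL^{\infty}$ gradient control that $l$ requires. The paper itself does not claim such uniformity: the Remark following the corollary concedes that the $O(\Psi(\delta))$ constant contains the factor $1/l(\varphi)$, so the modulus there is allowed to depend on the ($\delta$-dependent) argument. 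Either state the conclusion with that $\delta$-dependent constant, as the paper does, or supply a genuine uniform pointwise gradient bound on $\varphi_{\alpha(\delta,f^{\delta})}$; your current confinement argument does not provide one.
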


\begin{remark}
Note that the term $O(\Psi(\delta))$ in Corollary \ref{corollary_smoothed_TV_convergence}
also contains the term $\frac{1}{l(\varphi)}$ where
$l : \cW^{1,2}(\Omega) \rightarrow \R_{+}$ is defined in
Theorem \ref{smoothed_TV_strongly_convex} as well as in Corollary 
\ref{bregman_smoothed-TV_Hessian}.
\end{remark}


\section*{Acknowledgement}

The author is indepted to D. Russell Luke for the valuable
help in the formulation of Proposition \ref{proposition_q-convexity} and
is also grateful to Thorsten Hohage for the strategic discussion
both on the development of the general theory and
on the correct interpretation of the general analysis for the
smoothed-TV functional.

\newpage
\bigskip
\section*{References}


 \bibliographystyle{alpha}

\end{document}